\documentclass[12pt]{article}

\usepackage{graphicx,tikz,color,url}
\usepackage{amsmath,amssymb,latexsym}
\usepackage{pgfplots}
\usepackage{float}
\usepackage{mathrsfs}
\usepackage{cite}
\usepackage{soul}
\usetikzlibrary{arrows}
\usepackage{amsthm}
\newtheorem{theorem}{Theorem}[section]

\newtheorem{lemma}[theorem]{Lemma}
\newtheorem{proposition}[theorem]{Proposition}
\newtheorem{corollary}[theorem]{Corollary}

\newtheorem{observation}[theorem]{Observation}
\newtheorem{remark}[theorem]{Remark}

\newcommand{\smallqed}{{\tiny ($\Box$)}}
\newcommand{\cS}{{\cal S}}
\begin{document}
\title{S-packing chromatic critical paths and cycles}

\author{
\and G\"{u}lnaz Boruzanl{\i} Ekinci $^{a,}$\thanks{Email: \texttt{gulnaz.boruzanli@ege.edu.tr}} \\
     \and Csilla Bujt\'{a}s $^{b,c,}$\thanks{Email: \texttt{csilla.bujtas@fmf.uni-lj.si}}
      \and Didem G\"{o}z\"{u}pek $^{d,}$\thanks{Email: \texttt{didem.gozupek@gtu.edu.tr}}
      \and Asl{\i}han G\"{u}r $^{e,}$\thanks{Email: \texttt{agur@gtu.edu.tr}}
	}
\maketitle

\begin{center}
$^a$ Department of Mathematics, Faculty of Science, Ege University, İzmir, T\"{u}rkiye\\
	\medskip
	$^b$ Faculty of Mathematics and Physics, University of Ljubljana, Slovenia\\
	\medskip
	$^c$ Institute of Mathematics, Physics and Mechanics, Ljubljana, Slovenia\\
	\medskip
    $^d$ Department of Computer Engineering, Gebze Technical University, T\"{u}rkiye \\
	\medskip
    $^e$ Department of Mathematics, Gebze Technical University, T\"{u}rkiye \\
	\medskip
 \end{center}

 \begin{abstract}
Let $S=(s_1,s_2,\ldots)$ be a non-decreasing sequence of positive integers. For a graph $G$ with vertex set $V(G)$, a labeling 
$\phi \colon V(G)\to \{1,\ldots,k\}$ is an $S$-packing $k$-coloring 
if, whenever two distinct vertices $u,v\in V(G)$ are assigned the same color $i$, their distance in $G$ is greater than $s_i$. 
The minimum $k$ for which $G$ admits such a coloring is the 
 $S$-packing chromatic number of $G$. A graph $G$ is $\chi_S$-vertex-critical if $\chi_S(G-v) < \chi_S(G)$ for every $v \in V(G)$, and it is $\chi_S$-critical if $\chi_S(H) < \chi_S(G)$ holds for every proper subgraph $H$ of $G$.
In this paper, the exact value of $\chi_S(P_n)$ is determined for every path of order $n$ and for every packing sequence $S$ where $s_i < 2^i$ holds for each entry $s_i$. As a consequence, $\chi_S$-critical and $\chi_S$-vertex-critical paths are identified for each such sequence $S$.
In addition, we extend earlier results on $\chi_S$-critical cycles and provide a complete characterization of $\chi_S$-critical and $\chi_S$-vertex-critical cycles for packing sequences $S= (1, s_2, \dots )$ with $s_2 \in \{2,3\}$ and $s_3,s_4 \in \{4,5,6,7\}$.

 \end{abstract}
 \noindent
{\bf Keywords:} packing coloring; $S$-packing coloring; $S$-packing chromatic critical graph;  cycle graph; path graph  \\

\noindent
{\bf AMS Subj.\ Class.\ (2020)}: 05C15, 05C12

\section{Introduction} \label{sec:intro}

 A \emph{packing sequence} $S=(s_1,s_2,\ldots)$ is a non-decreasing sequence of positive integers. Consider a graph $G$ with vertex set $V(G)$. A labeling 
$\phi \colon V(G)\to [k]=\{1,\ldots,k\}$ is an {\em $S$-packing $k$-coloring} 
if, whenever two distinct vertices $u,v\in V(G)$ are assigned the same color $i$, their distance in $G$ satisfies $d_G(u,v)>s_i$. 
The minimum $k$ for which $G$ admits such a coloring is the 
{\em $S$-packing chromatic number} of $G$, denoted by $\chi_S(G)$~\cite{goddard-2012}.

The notion of $S$-packing coloring is very general. In particular, when 
$S=(k,k,\ldots)$, one obtains the class of \emph{$k$-distance colorings}~\cite{kramer-2008}, where $k=1$ coincides with the  classical graph coloring. 
Furthermore, taking $S=(1,2,3,\ldots)$ yields the standard packing coloring ~\cite{Goddard, Bresar2}. In recent years, the concept of $S$-packing coloring has drawn considerable attention~\cite{bresar-2025, bresar-2025b, elzain-2025+, holub-2023, kostochka-2021, liu-2020, mortada-2024, mortada-2025, mortada-2026, yang-2023}. 


Several works in the literature have investigated criticality in packing colorings~\cite{klavzar-2019, bresar-2022, ferme-2022}. As packing colorings have been generalized to $S$-packing colorings, the notion of criticality has also been extended to $S$-packing colorings. In particular, vertex criticality in $S$-packing coloring was introduced by Holub et al.~\cite{holub-2020}, where a graph $G$ is \emph{$\chi_S$-vertex-critical} if $\chi_S(G-v) < \chi_S(G)$ for every $v \in V(G)$. Later, a characterization of $\chi_S$-vertex-critical graphs with $\chi_S(G)=4$ for a family of packing sequences was provided by Klav\v{z}ar et al.~\cite{klavzar-2023}. Recently, criticality in $S$-packing coloring was introduced in~\cite{ekinci-2026}, where a graph $G$ is called \emph{$\chi_S$-critical} if $\chi_S(H) < \chi_S(G)$ 
for every proper subgraph $H$ of $G$. In the same paper, $\chi_S$-critical cycles were characterized under different conditions, 
and the remaining cases were left as an open problem. In this paper, among other results, we address this problem and obtain characterizations of $\chi_S$-critical cycles for several cases.

\subsection{Terminology}
Let  $G=(V(G), E(G))$ be a graph, where $V(G)$ and $E(G)$ correspond to the vertex and edge set of $G$, respectively. The distance between two vertices $u$ and $v$ in  $G$ is denoted by $d_G(u,v)$.
 Let $P_n$ and $C_n$ stand for the path and the cycle of order $n$, respectively.

The set of all packing sequences is denoted by $\mathcal{S}= \{(s_1, s_2, ...): s_1 \le s_2 \le ...\}$. Throughout this paper, we assume that all packing sequences are infinite. The notation $\cS_{a_1,\ldots,a_{i-1},\,\overline{a_i},\,a_{i+1},\ldots}$ 
stands for the set of packing sequences $(s_1,s_2,\ldots)$ satisfying $s_j=a_j$ for $j\ne i$ 
and $a_i \le s_i \le a_{i+1}$. For example, ${\cal S}_{1,2,\overline{4},7}$ is the set of packing sequences with $s_1=1$, $s_2=2$, $4\le s_3\le 7$, and $s_4=7$. Note that $\cS_1$ is the set of all packing sequences $(s_1,s_2,\ldots)$ with $s_1=1$. 


Next, we introduce the following family of packing sequences. For an integer $k \ge 2$, let us define
\begin{equation} \label{eq:def-S-k}
    \cS(k)= \{(s_1, s_2, \dots ) \colon  2^{i-1} \leq s_i <2^i \mbox{ for all } i \leq k-1 \mbox{ and } s_k < 2^{k-1} \}.
    \end{equation}
    Then $\cS(1)$ is not defined, and $\cS(2)= \cS_{1,1}$. In general $\cS(k) \subseteq \cS_1$. 
\medskip 

The following observation follows directly from the definition and will be used in the sequel.

\begin{observation} \label{obs:basic}
   If $S=(s_1, s_2, \dots )$ and $S'=(s_1', s_2', \dots )$ are two packing sequences such that $s_i \leq s_i'$ for every positive integer $i$, then $\chi_S(G) \leq \chi_{S'}(G)$ holds for every graph $G$.
\end{observation}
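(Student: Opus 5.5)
The argument is a direct monotonicity check, with no case analysis. I will show that every $S'$-packing coloring of $G$ is also an $S$-packing coloring of $G$ using the same set of colors. The inequality between the chromatic numbers then follows by taking an optimal coloring.

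First, fix a graph $G$ and let $k=\chi_{S'}(G)$. Choose an $S'$-packing $k$-coloring $\phi\colon V(G)\to[k]$. Now take any two distinct vertices $u,v$ with $\phi(u)=\phi(v)=i$. Since $\phi$ is an $S'$-packing coloring, $d_G(u,v)>s_i'$. By hypothesis $s_i'\ge s_i$, so $d_G(u,v)>s_i$. This holds for every color class, so $\phi$ satisfies the defining condition of an $S$-packing $k$-coloring. By minimality in the definition of $\chi_S$, we get $\chi_S(G)\le k=\chi_{S'}(G)$.

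The only point that needs care is whether $\chi_{S'}(G)$ exists at all. If $G$ is finite, coloring all vertices with distinct colors is an $S'$-packing $|V(G)|$-coloring, so $\chi_{S'}(G)$ is well defined and the argument above applies. If $G$ is infinite and $\chi_{S'}(G)$ is infinite, the inequality holds trivially.

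No step is a genuine obstacle. The key observation is that the constraint on color $i$ depends only on the $i$-th entry of the sequence, and it weakens as that entry decreases. The non-decreasing requirement on packing sequences is not used in the argument.
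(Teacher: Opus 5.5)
Your proof is correct and matches the paper's approach. The paper gives no written proof and simply says the observation ``follows directly from the definition'', which is exactly your point that every $S'$-packing coloring is already an $S$-packing coloring because $d_G(u,v)>s_i'\ge s_i$.
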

\subsection{Summary of results}

The earlier studies left open the problem of determining the $S$-packing chromatic number of paths except when $S$ belongs to a very specific subfamily of $\cS$. In~\cite{ekinci-2026}, this question was posed directly. In this paper, we determine $\chi_S(P_n)$ for all packing sequences $S=(s_1,s_2, \dots)$ where the entries are not too large. More precisely, we consider all sequences where $s_i < 2^i$ holds for each  $i \leq \lfloor \log_2(n) \rfloor +1$. Theorems~\ref{thm:path} and~\ref{cor:path-lower-bound}~(ii) can be summarized in the following way.
If $S=(s_1, s_2, \dots)$ and $2^{i-1} \leq s_i < 2^i$ holds for each  $ i \leq \lfloor \log_2(n) \rfloor +1$, then 
$$\chi_S(P_n)= \lfloor \log_2(n) \rfloor +1.
$$
Otherwise, $S \in \cS(k)$ for an integer $k \leq \lfloor \log_2(n) \rfloor +1$, and
$ \chi_S(P_n)= k$.

Based on these results, we identify  $\chi_S$-critical and $\chi_S$-vertex-critical paths for the considered packing sequences. In particular, we show that  if $S \in \cS(k)$, then a path $P_n$ is $\chi_S$-critical (and $\chi_S$-vertex-critical) if and only if $n \in \{2^0, 2^1, \dots, 2^{k-1}\}$.
\medskip

In~\cite{ekinci-2026}, $\chi_S$-critical cycles were identified for the following families of packing sequences.
\begin{theorem}{\rm \cite{ekinci-2026}}
\label{thm1}
	If $n \ge 3$, then the following hold.
	\begin{itemize}
		\item[(i)] If $S \in \mathcal{S}_{1,1}$, then $C_n$ is $\chi_S$-critical if and only if $n$ is odd.
		
		\item[(ii)] If $S \in \mathcal{S}_{1,2,2}$, then $C_n$ is $\chi_S$-critical if and only if $n \in \{3,5\}$.
		
		\item[(iii)] If $S \in \mathcal{S}_{1,\overline{2},3}$, then $C_n$ is $\chi_S$-critical if and only if $n \not\equiv 0 \pmod{4}$.
	\end{itemize}
\end{theorem}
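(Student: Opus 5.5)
We treat the three parts separately. Throughout we use the observation that $C_n$ is $\chi_S$-critical exactly when $\chi_S(P_n) < \chi_S(C_n)$. Indeed, every proper subgraph of $C_n$ is a subgraph of $P_n = C_n - e$ together with possibly isolated vertices, and $\chi_S$ is monotone under taking subgraphs, because distances can only increase when edges are deleted. So in each case we compute $\chi_S(C_n)$, compute $\chi_S(P_n)$, and compare.

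\emph{Part (i): $S\in\mathcal S_{1,1}$.} Here colors $1$ and $2$ are ordinary independent-set colors.
\begin{itemize}
\item Since $n\ge 3$, we have $\chi_S(P_n)=2$, using alternating colors $1,2$.
\item If $n$ is even, the same alternating coloring works on $C_n$, so $\chi_S(C_n)=2$ and $C_n$ is not critical.
\item If $n$ is odd, two colors would require a proper $2$-coloring of an odd cycle, which is impossible. Hence $\chi_S(C_n)=3$, and $C_n$ is critical.
\end{itemize}

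\emph{Part (ii): $S\in\mathcal S_{1,2,2}$.} Here color $1$ is an independent set, and colors $2$ and $3$ are $2$-packings (pairwise distance at least $3$).
\begin{itemize}
\item For $P_n$ with $n\ge 4$, the periodic pattern $1,2,1,3,1,2,1,3,\dots$ gives $\chi_S(P_n)=3$. Equality holds because two colors cannot cover $P_4$: color $1$ covers at most two of its four vertices, and color $2$ covers at most one of the remaining ones at distance less than $3$. For small $n$ we have $\chi_S(P_3)=2$.
\item The same pattern works on $C_n$ when $4\mid n$, so $\chi_S(C_n)=3$ and $C_n$ is not critical.
\item For $n\not\equiv 0 \pmod 4$ with $n\ge 6$, we exhibit $4$-colorings, so that $\chi_S(C_n)=4$ exactly when $n\not\equiv0\pmod 4$. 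This lower bound is argued as in part (iii) below. Since also $\chi_S(P_n)=3$, a $4$-chromatic cycle would be critical. We therefore need to recheck the claim restricted to $n\in\{3,5\}$.
\item For this recheck we use that $s_3=2$ gives a $4$-th color of radius $s_4$, which may be large. A counting argument then shows that every cycle other than $C_3$ and $C_5$ has $\chi_S(C_n)=\chi_S(P_n)$. For $C_n$ with $n\not\equiv 0\pmod 4$ and $n\ge 6$, we place color $4$ once, or use a $1,2,1,3$ block structure with adjustments, so that the path coloring extends to the cycle.
\item For $C_3$ and $C_5$ we check directly: $\chi_S(C_3)=3>\chi_S(P_3)=2$, and $\chi_S(C_5)=4>3=\chi_S(P_5)$, since the diameter $2$ of $C_5$ forces colors $2$ and $3$ to be used at most once each.
\end{itemize}

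\emph{Part (iii): $S\in\mathcal S_{1,\overline2,3}$.} Here $s_3=3$, so color $3$ is used with pairwise distance at least $4$.
\begin{itemize}
\item The periodic pattern $1,2,1,3$ works on $P_n$, so $\chi_S(P_n)\le 3$. It also works on $C_n$ when $4\mid n$, because color $3$ then recurs at distance exactly $4$.
\item For $n\not\equiv0\pmod4$, a $3$-coloring of $C_n$ cannot exist, by a density count. Color $1$ covers at most $\lfloor n/2\rfloor$ vertices. If $n$ is odd, a $3$-coloring forces color $1$ to cover exactly $(n-1)/2$ vertices, with the remaining vertices forming gaps of size $1$ except for one gap of size $2$. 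The gap vertices need colors $2$ and $3$ with distance constraints $>2$ and $>3$, and this fails; it is a short case check.
\item If $n\equiv2\pmod4$, color $1$ must be a perfect alternating set. The other half is then a cycle of length $n/2$ in the distance-$2$ metric, and colors $2$ and $3$ would have to alternate there. That requires $n/2$ to be even, which is a contradiction.
\item Hence $\chi_S(C_n)=4>3$, and $C_n$ is critical for $n\not\equiv0\pmod4$ (with small cases checked directly).
\end{itemize}

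The main obstacle is the exact lower-bound case analysis on $C_n$ when the color-$1$ class is not perfectly alternating. We handle it by a discharging/count over gaps between consecutive color-$1$ vertices.
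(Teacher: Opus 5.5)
The paper does not reprove this statement (it is quoted from \cite{ekinci-2026}), so your argument has to stand on its own. Your reduction to $\chi_S(P_n)<\chi_S(C_n)$ and part (i) are fine. Part (ii), however, contains a false step and never establishes the upper bound it needs. Your third bullet asserts $\chi_S(C_n)=4$ for all $n\ge 6$ with $n\not\equiv 0\pmod 4$, ``argued as in part (iii)''. This is wrong: $1,2,3,1,2,3$ is an $S$-packing $3$-coloring of $C_6$ because $s_2=s_3=2$. The lower-bound mechanism of (iii) genuinely uses $s_3=3$ (a vertex at distance $3$ from a $3$-colored vertex is blocked), and that is not available here. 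The ``recheck'' bullet does not repair this. Placing color $4$ gives a $4$-coloring, not a $3$-coloring, and a counting argument can only give lower bounds. So nothing you wrote proves $\chi_S(C_n)\le 3=\chi_S(P_n)$ for $n\ge 6$. The missing idea is that colors $2$ and $3$ carry the same constraint, so the blocks $123$ and $1213$ can be concatenated arbitrarily around a cycle. At each junction, consecutive occurrences of color $2$ or of color $3$ are at distance $3$ or $4$, and no two $1$'s become adjacent. Every $n\ge 6$ can be written as $3a+4b$ with $a,b\ge 0$. Together with $1213$ on $C_4$ and your correct treatment of $C_3$ and $C_5$, this proves (ii).

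In part (iii) the direction is right, but the structural facts you rely on are asserted, not proved. No density count forces color $1$ to occupy exactly $(n-1)/2$ vertices for odd $n$; a hypothetical coloring could have any odd number of gaps of size $2$. Nor does anything force a perfectly alternating color-$1$ class when $n\equiv 2\pmod 4$. Both cases follow from one local lemma that you should state and prove: \emph{for $n\ge 5$, no two adjacent vertices avoid color $1$.}

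First, three consecutive non-$1$ vertices are impossible, since any two of them are within distance $2$ and only colors $2,3$ remain. So let $x,y$ be adjacent non-$1$ vertices, say $\phi(x)=2$ and $\phi(y)=3$. Their outer neighbors $u$ (of $x$) and $w$ (of $y$) must then have color $1$. The other neighbor $a$ of $u$ can receive no color: $a\neq 1$ as $a\sim u$, $a\neq 2$ as $d(a,x)=2$, and $a\neq 3$ as $d(a,y)\le 3=s_3$.

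Consequently color $1$ alternates and $n$ is even. The remaining vertices, consecutive ones at distance $2$, must alternate $2,3$, which forces $4\mid n$. Since a $3$-coloring involves only $s_1,s_2,s_3$, this is also the classical fact that $C_n$ has packing chromatic number $4$ for $n\ge 5$ and $4\nmid n$.

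Finally, only $\chi_S(C_n)\ge 4$ is needed, and the equality $\chi_S(C_n)=4$ you state can fail. For $s_2=3$, the cycle $C_7$ has diameter $3$, so each of colors $2,3,4$ appears at most once and color $1$ at most three times. Hence $\chi_S(C_7)\ge 5$.
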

Observe that Theorem~\ref{thm1} characterizes $\chi_S$-critical cycles for $S \in \cS(2)\cup \cS(3)$. This paper concentrates on the next case and solves the problem for every $S \in \cS(4)$.
The seven cases in Theorems~\ref{thm-4.1}, \ref{thm-4.2}, and~\ref{thm-4.3} together provide a complete characterization of $\chi_S$-critical  cycles for $S \in \cS(4)$. For example, it is proved that for every $S \in \mathcal{S}_{1,2,\overline{4},5}$,  $C_n$ is $\chi_S$-critical if and only if $n \in \{3, 5, 6, 7, 9, 10, 11,\allowbreak 17\}$; while for a packing sequence $S$ from $\mathcal{S}_{1,2,\overline{4},7} \cup \cS_{1,3,\overline{4},7} \cup \cS_{1,3,\overline{5}, 6}$, a cycle $C_n$ is $\chi_S$-critical if and only if   $n \not\equiv 0 \pmod{8}$ and $n \neq 4$. In addition, Theorem~\ref{thm:vertexcritical} identifies $\chi_S$-vertex-critical cycles for every $S \in \cS(4)$.

\paragraph{Structure of the paper.} 
In Section~\ref{sec:preliminaries}, several useful lemmas are proved.
Section~\ref{sec:paths} is devoted to the $S$-packing chromatic number of paths, where we establish general results and identify all $\chi_S$-critical and $\chi_S$-vertex-critical paths if $S \in \bigcup_{j=2}^\infty \cS(k)$. Sections~\ref{sec:cycles} and~\ref{sec:vertex-critical-cycles} focus on $\chi_S$-critical cycles and $\chi_S$-vertex-critical cycles over $\cS(4)$, respectively. 

\section{Preliminaries} \label{sec:preliminaries}

 In this section, several auxiliary results are established to provide insight into the problem and to be used in the sequel.

\begin{lemma} \label{lem:path1}
    The following statements are true for every positive integer $n$ and packing sequence $S \in \cS_1$.
    \begin{itemize}
        \item[(i)] There exists an $S$-packing coloring of the path $P_n\colon v_1 \dots v_n$ with $\chi_S(P_n)$ colors such that color~$1$ is assigned to every vertex $v_i$ with an odd index $i$.
         \item[(ii)] If  $S=(1,s_2, \dots)$ and $S'=(s_1', s_2', \dots )$ are two packing sequences such that $s_i'=\lfloor \frac{s_{i+1}}{2} \rfloor$ for every positive integer $i$, then it holds for every $n\ge 2$ that
         \begin{equation} \label{eq:path-half}
             \chi_S(P_n)=\chi_{S'}(P_{\lfloor \frac{n}{2} \rfloor} )+1.
         \end{equation} 
    \end{itemize}

\end{lemma}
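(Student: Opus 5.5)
For part~(i), I would start from an optimal $S$-packing coloring $\phi$ of $P_n$ with $k=\chi_S(P_n)$ colors and modify it step by step until every odd-indexed vertex gets color~$1$. Since $s_1=1$, the vertices of color~$1$ form an independent set in the path. The plan is a left-to-right exchange argument. Let $i$ be the smallest odd index with $\phi(v_i)\neq 1$. By minimality, $v_{i-2}$ (if it exists) has color $1$, so $v_{i-1}$ does not. Let $v_j$ be the first vertex of color~$1$ to the right of $v_i$, if there is one. The color-$1$ vertices to the right of $v_i$ form an independent set, so I would shift that whole pattern one step to the left, or simply recolor $v_i$ with $1$ if $v_{i+1}$ is not colored $1$. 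The cleaner option I plan to use is a global recoloring. Assign color~$1$ to all odd-indexed vertices. Then look at the even-indexed vertices $v_2,v_4,\dots$, which induce a path $P_{\lfloor n/2\rfloor}$ in which distances are halved. Color these using colors $2,\dots$ taken from the original coloring.

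To make this work, I first need to prove (ii). Part (i) then follows as a byproduct: an optimal coloring for $P_n$ arises as $1$ on the odd vertices plus a coloring of the even vertices.

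For the upper bound in (ii), take an optimal $S'$-packing coloring $\psi$ of $P_{\lfloor n/2\rfloor}\colon u_1\dots u_{\lfloor n/2\rfloor}$. Define $\phi(v_{2t})=\psi(u_t)+1$ and $\phi(v_{2t-1})=1$. Two even vertices $v_{2t},v_{2t'}$ with the same color $c+1$ satisfy $|t-t'|>s'_c=\lfloor s_{c+1}/2\rfloor$. Hence $|t-t'|\ge \lfloor s_{c+1}/2\rfloor+1$, so $2|t-t'|\ge s_{c+1}+1$, i.e. the distance exceeds $s_{c+1}$. Color~$1$ is valid because $s_1=1$. This gives $\chi_S(P_n)\le \chi_{S'}(P_{\lfloor n/2\rfloor})+1$, and the coloring has the form claimed in (i).

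For the lower bound, which I expect to be the main obstacle, take an optimal $S$-coloring $\phi$ of $P_n$. I want to extract an $S'$-coloring of $P_{\lfloor n/2\rfloor}$ with one fewer color. Consider the pairs $\{v_{2t-1},v_{2t}\}$ for $t=1,\dots,\lfloor n/2\rfloor$. Since $s_1=1$, each pair contains at most one vertex of color~$1$, so each pair has a vertex with color $\ge 2$. Choose one such vertex $w_t$ and set $\psi(u_t)=\phi(w_t)-1$.

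To verify validity, suppose $\psi(u_t)=\psi(u_{t'})=c$ with $t<t'$. The chosen vertices satisfy $d(w_t,w_{t'})\le 2(t'-t)+1$ and $d(w_t,w_{t'})>s_{c+1}$. From this I need $t'-t>\lfloor s_{c+1}/2\rfloor$. The inequality $2(t'-t)+1\ge s_{c+1}+1$ gives $t'-t\ge s_{c+1}/2$. This is enough when $s_{c+1}$ is odd, but it fails by one when $s_{c+1}$ is even.

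To handle the even case, I would choose $w_t$ more carefully. Whenever neither vertex of a pair has color $1$, I can recolor to reduce to the case where each pair has exactly one color-$1$ vertex. If $\phi$ avoids color~$1$ on a whole pair, recolor one vertex of the pair to $1$ when its neighbors allow it. Failing that, argue that the color-$1$ class can be shifted consistently so the pairs are aligned. The aim is that all chosen $w_t$ have the same parity, which gives $d(w_t,w_{t'})=2(t'-t)$ exactly and then $2(t'-t)>s_{c+1}$ yields the bound.

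The parity alignment is therefore the delicate step. I would attempt it by first proving (i) directly through a left-to-right induction. At the first odd index $i$ with $\phi(v_i)\ne1$, swap to make $v_i$ colored $1$. The color of $v_i$ can be moved to $v_{i+1}$ provided $v_{i+1}$ had color $1$, which increases no distances problematically in the monotone direction. I would also handle the endpoint when $n$ is even, where the last vertex $v_n$ may need to be recolored. Once (i) holds, the lower bound follows immediately by restricting to the even vertices.
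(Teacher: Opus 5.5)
You prove the upper bound in (ii) exactly as the paper does, and you are right that once (i) is available the lower bound follows by restricting to the even-indexed vertices. The gap is (i) itself. Your argument rests on it entirely, since you correctly note that the pairing argument fails by one when $s_{c+1}$ is even, and the swap step you propose for it is false.

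Moving the color $c=\phi(v_i)$ one step right onto $v_{i+1}$ increases distances to color-$c$ vertices on the left. But it \emph{decreases} by one the distance to the next color-$c$ vertex on the right. This breaks the packing condition whenever that distance was exactly $s_c+1$. Concretely, take $S=(1,2,3,\dots)$ and the optimal coloring $2,1,3,2,1$ of $P_5$. The first odd index not colored $1$ is $i=1$, and the swap gives $1,2,3,2,1$, where the two vertices of color $2$ are at distance $2=s_2$. Shifting the color-$1$ pattern one step left has the same defect. ``Recolor or shift so the pairs are aligned'' is not an argument.

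The missing idea is to move the non-$1$ vertices as a block rather than one at a time. The paper deletes all vertices of color $1$. Since $s_1=1$ these are pairwise nonadjacent, so there are at least $\lfloor n/2\rfloor$ surviving vertices $v_{i_1},v_{i_2},\dots$, and consecutive ones satisfy $i_{a+1}-i_a\le 2$. Hence $d_{P_n}(v_{i_a},v_{i_b})\le 2|b-a|$. The paper then places these vertices in order on the even positions of a fresh $P_n$ and colors every odd position $1$. The distance between the $a$-th and $b$-th of them becomes exactly $2|b-a|$, so no distance between equally colored vertices decreases.

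Alternatively, your left-to-right induction can be repaired by replacing the swap with a suffix shift. Give $v_i$ color $1$, give each $v_j$ with $j>i$ the old color of $v_{j-1}$, and discard the old color of $v_n$. The new color-$1$ vertex has neighbors not colored $1$, because $v_{i-1}$ follows the color-$1$ vertex $v_{i-2}$ and $v_{i+1}$ now carries $\phi(v_i)\neq 1$. All other distances are preserved or grow by one. The first odd index not colored $1$ strictly increases, so the process terminates.
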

\begin{proof}
    (i) Set $S=(1, s_2, \dots)$ and let $\phi$ be an $S$-packing coloring of $G=P_n$. Construct the path $G'$ by removing all vertices with color~$1$ from $G$ and forming a path $G': v_{i_1}\dots v_{i_p}$ with the remaining vertices such that $i_1 < \cdots < i_p$. Then, to get a path $G''$ of order $n$ and a coloring $\phi''$, we insert new vertices with color~$1$ into $G'$. Formally, we    
    define $V(G'')=\{u_1,\dots , u_n\}$ such that $u_j= v_{i_{j/2}}$ holds for every even index $j \leq n$ and $u_1, u_3, \dots$ are new vertices. The remaining vertices of $G'$ (if they exist) are omitted from $G''$.  The edges of $G''$ are chosen so that $G''\colon u_1 \dots u_n$ is a path. Finally, a vertex coloring $\phi''$ of $G''$ is defined such that 
    $$
    \phi''(u_j)= 
    \begin{cases}
	1, &\text{if $j$ is odd},\\
	\phi(v_{i_{j/2}}), &\text{if $j$ is even}.\\
    \end{cases}
    $$
    We state that $\phi''$ is an $S$-packing coloring of $G''$. The $S$-packing coloring condition clearly holds for the vertices assigned color~$1$. If $\phi''(u_x)= \phi''(u_y)= \ell \neq 1$, then $x$ and $y$ are even integers, and $u_x=v_{i_{x/2}}$, $u_y=v_{i_{y/2}}$ hold. Further, the distance of $v_{i_{x/2}}$ and $v_{i_{y/2}}$ is $d'=|\frac{x}{2} - \frac{y}{2}|$ in $G'$. It follows by the construction of $G'$ that $G$ contains at most $d'$ vertices with color $1$ between these two vertices and consequently $$d_G(v_{i_{x/2}},v_{i_{y/2}}) \leq 2d' =|x-y|. $$
    The definition of $\phi''$ gives $\phi(v_{i_{x/2}})=\phi(v_{i_{y/2}})=\ell $, so the distance of these two vertices in $G$ is at least $s_\ell+1$. These facts together imply
     $$ d_{G''}(u_x,u_y) = |x-y| \ge d_G(v_{i_{x/2}},v_{i_{y/2}}) \ge s_\ell +1.
    $$
    This proves that $\phi''$ is an $S$-packing coloring of the path $P_n$ and $\phi''$ does not use more colors than $\phi$. Therefore, if $\phi$ uses $\chi_S(P_n)$ colors, so does $\phi''$. This completes the proof of (i). 
    \medskip

    (ii) According to statement (i), we may start with an $S$-packing coloring $\phi$ of $G=P_n$ where every vertex with an odd index receives the color~$1$ and $\phi$ uses $k=\chi_S(P_n)$ colors. When we construct the path $G'$ by removing the vertices with color~$1$ from $G$, only the vertices of even indices remain. Therefore, $G': v_{i_1}\dots v_{i_p}$ is a path on $p=n'= \lfloor \frac{n}{2} \rfloor$ vertices and $i_j=2j$ for every $j \in [n']$. 
    
   Define the coloring $\phi'$ of $G'$ such that $\phi'(v_{i_j})= \phi(v_{2j})-1$ for every $j \in [n']$.  Then $\phi'$ uses $k-1$ colors. We prove that $\phi'$ is an $S'$-packing coloring of $G'$. Let $\phi'(v_{i_a})= \phi'(v_{i_b})= \ell$ and assume that $a <b$. Hence $\phi(v_{2a})= \phi(v_{2b})= \ell+1$ also holds. These facts together with $s_{\ell+1}\ge 2s_\ell'$ imply
    $$ d_{G'}(v_{i_a}, v_{i_b})=\frac{ d_{G}(v_{2a}, v_{2b})}{2} =b-a \ge \left\lceil \frac{ s_{\ell+1}+1}{2} \right\rceil \ge
    \left\lceil \frac{2 s_{\ell}'+1}{2} \right\rceil =  s_\ell'+1, 
    $$
    and we may infer that $\phi'$ is an $S'$-packing coloring of $G'$. Therefore, $\chi_S(P_n)\ge \chi_{S'}(P_{\lfloor \frac{n}{2} \rfloor} )+1$.

    To prove the other direction, we start with an $S'$-packing coloring $\varphi'$ of $G'= P_{n'} \colon u_1 \dots u_{n'}$ where $n'= \lfloor \frac{n}{2} \rfloor$. We assume that $\varphi'$ uses $k'=\chi_{S'}(G')$ colors. Let $G=P_n \colon v_1\dots v_n$ and let $\varphi$ be a vertex coloring of $G$ such that $\varphi(v_i)=1$ if $i$ is odd, and $\varphi(v_i)=\varphi'(u_{i/2})+1$ if $i$ is even. Then $\varphi$ uses $k'+1$ colors. Now, we show that $\varphi$ is an $S$-packing coloring of $G$. If $\varphi(v_i)=\varphi(v_j)=1$, then $d_G(v_i, v_j) \ge 2$ by definition. If $\varphi(v_i)=\varphi(v_j)=\ell$ for an integer $\ell\ge 2$, then $i$ and $j$ are even, and $\varphi'(u_{i/2})=\varphi'(u_{j/2})= \ell-1$. For this case, $d_G(v_i, v_j)=2 d_{G'}(u_{i/2}, u_{j/2}) $ also holds, and we may infer
    $$ d_G(v_i, v_j)=2 d_{G'}(u_{i/2}, u_{j/2}) \ge 2 (s'_{\ell-1}+1) = 2 \left(\left\lfloor \frac{s_\ell}{2} \right\rfloor +1 \right)\ge s_\ell +1. 
    $$
    Since this inequality holds for every two vertices sharing a color $\ell$ different from~$1$ in $G$, we conclude that $\varphi$ is an $S$-packing coloring of $G$ and therefore $\chi_S(P_n) \leq \chi_{S'}(P_{\lfloor \frac{n}{2} \rfloor} )+1$. This completes the proof for (ii).  
\end{proof}

\begin{lemma} \label{lem:path2}
    The following statements hold for every positive integer $n$ and packing sequence $S=(1, s_2, \dots )$.
    \begin{itemize}
        \item[(i)] If $n$ is even, then $\chi_S(P_{n+1})=\chi_S(P_n)$.
        \item[(ii)] If $S'=(1, s_2', \dots )$ such that $s_i' \in \{2 \lceil \frac{s_i+1}{2}\rceil-1, s_i \}$ for every $i \ge 2$, then $\chi_{S'}(P_n)= \chi_S(P_n)$.
    \end{itemize}    
\end{lemma}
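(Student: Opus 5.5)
Both parts follow from Lemma~\ref{lem:path1}, which reduces a coloring of a path to a coloring of a path of half the length with the shifted and halved sequence $S'$. Note that $S=(1,s_2,\dots)\in\cS_1$, so that lemma applies.

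\textbf{Part (i).} The case $n=0$ does not arise, so let $n\ge 2$ be even. Since $\lfloor \frac{n+1}{2}\rfloor=\lfloor \frac{n}{2}\rfloor = \frac{n}{2}$, Lemma~\ref{lem:path1}(ii) applies to both $P_n$ and $P_{n+1}$ with the same derived sequence $S'$. It gives
$$\chi_S(P_{n+1})=\chi_{S'}(P_{n/2})+1=\chi_S(P_n),$$
and (i) follows.

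\textbf{Part (ii).} First I check that $S'$ is again a packing sequence. For $s_i$ odd both choices equal $s_i$. For $s_i$ even the second choice is $s_i+1$. Since $s_i\le s_{i+1}$, the value $s_i'\le s_i+1$ can exceed $s_{i+1}'$ only if $s_{i+1}=s_i$ with a different choice. In any case, the statement only involves the values $s_i'$, and Observation~\ref{obs:basic} does not require monotonicity in the argument below. So I would phrase the proof directly through the two halving maps rather than rely on $S'$ being non-decreasing.

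The key observation is that
$$\left\lfloor \frac{s_i'}{2}\right\rfloor=\left\lfloor \frac{s_i}{2}\right\rfloor$$
for each $i\ge 2$. This is clear when $s_i'=s_i$. Otherwise $s_i$ is even and $s_i'=s_i+1$, and $\lfloor (s_i+1)/2\rfloor=s_i/2$. Hence the derived sequences of Lemma~\ref{lem:path1}(ii) obtained from $S$ and from $S'$ coincide; call this common sequence $T=(\lfloor s_2/2\rfloor,\lfloor s_3/2\rfloor,\dots)$. For $n\ge 2$, Lemma~\ref{lem:path1}(ii) then gives
$$\chi_S(P_n)=\chi_T(P_{\lfloor n/2\rfloor})+1=\chi_{S'}(P_n).$$
The case $n=1$ is trivial, since both values equal $1$.

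The only delicate point is whether Lemma~\ref{lem:path1} needs $S'$ to be non-decreasing. Its proof uses only the pairwise distance conditions. Moreover, $T$ is non-decreasing because $S$ is. So the proof of Lemma~\ref{lem:path1} goes through verbatim, with $s'_{\ell}\ge 2\lfloor s'_{\ell}/2\rfloor$ playing the role of the inequality used there.
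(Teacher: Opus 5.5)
Your proof is correct, but it takes a different route from the paper. The paper works directly with the normalized coloring from Lemma~\ref{lem:path1}(i). For (i), it takes an optimal coloring with color $1$ on every odd-indexed vertex. When $n$ is even, $v_n$ is not colored $1$, so the new vertex $v_{n+1}$ can receive color $1$. For (ii), it observes that in such a coloring any two vertices sharing a color $\ell\neq 1$ are at even distance. An even $s_\ell$ can therefore be raised to $s_\ell+1$ at no cost, and Observation~\ref{obs:basic} gives the reverse inequality.

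You instead read both parts off the recursion $\chi_S(P_n)=\chi_T(P_{\lfloor n/2\rfloor})+1$ of Lemma~\ref{lem:path1}(ii). Its right-hand side depends on $n$ only through $\lfloor n/2\rfloor$, and on $S$ only through the values $\lfloor s_i/2\rfloor$, which the change $s_i\mapsto s_i'$ leaves unchanged. This gives one explanation for both statements. The cost is that you rely on the stronger lemma and must check that its hypotheses, or at least its proof, hold in your setting.

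You did this correctly for the possible non-monotonicity of $S'$. You did not address positivity. If $s_2=1$, then $T$ has first entry $\lfloor s_2/2\rfloor=0$. Then $T$ is not a packing sequence in the paper's sense, and Lemma~\ref{lem:path1}(ii) does not literally apply, in either part. This is easy to fix in one of two ways:
\begin{itemize}
\item note that the lemma's proof goes through verbatim when an entry is $0$ (such a color imposes no constraint); or
\item observe directly that $s_2=1$ forces $s_2'=1$, so $\chi_S(P_m)=\chi_{S'}(P_m)=\min\{m,2\}$ for all $m$.
\end{itemize}
Either way, it should be stated. The paper's direct argument needs no such case split.
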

\begin{proof}  \begin{itemize}
        \item[(i)]
     Let $S \in \cS_1$, $P_n \colon v_1\dots v_n$, and $\chi_S(P_n)=k$. By Lemma~\ref{lem:path1} (i), there is an $S$-packing $k$-coloring $\phi$ of $P_n$ that assigns color~$1$ to every vertex $v_i$ with an odd index $i$. If $n$ is even, then $\phi(v_n) \neq 1$, and $\phi$ can easily be extended to an $S$-packing $k$-coloring of $P_{n+1}$. Then $\chi_S(P_{n+1}) \leq \chi_S(P_n)$. As the reverse inequality holds by definition, $\chi_S(P_{n+1})=\chi_S(P_n)$ follows.

   \item[(ii)] If $s_i$ is odd, $2 \lceil \frac{s_i+1}{2}\rceil-1=s_i$, and $s_i'=s_i$. If $s_i$ is even, then $2 \lceil \frac{s_i+1}{2}\rceil-1=s_i+1$ and $s_i'$ equals either $s_i$ or $s_i+1$. 
   Let $\phi$ be an $S$-packing coloring of $P_n \colon v_1\dots v_n$ such that every vertex with an odd index receives color~$1$. Such a coloring with $\chi_S(P_n)$ colors exists by Lemma~\ref{lem:path1} (i). Then the distance between any two vertices $v_x$ and $v_y$ sharing a color $\ell\neq 1$ is even. Therefore, if $s_\ell$ is even, then 
   $$d (v_x,v_y) \ge s_\ell +2  \ge s_\ell' +1; 
   $$
   and if $s_\ell$ is odd, then $s_\ell'=s_\ell$ directly implies $d (v_x,v_y)  \ge s_\ell' +1$. We conclude that $\phi$ is an $S'$-packing coloring and $\chi_S(P_n) \ge \chi_{S'}(P_n)$. The equality then follows by Observation~\ref{obs:basic}.
    \end{itemize}
\end{proof}

\begin{lemma}  \label{lem:cycle}
    Let $S=(1,s_2, \dots)$ and $S'=(s_1', s_2' \dots )$ be packing sequences such that $s_i'=\lfloor \frac{s_{i+1}}{2} \rfloor$ for every positive integer $i$. If $n \ge 3$ is an integer, the following statements hold.
    \begin{itemize}
        \item[(i)] $\chi_S(C_{2n}) \leq \chi_{S'}(C_n) +1 $.
        \item[(ii)] If the cycle $C_{2n}$ is $\chi_S$-critical, then $C_n$ is $\chi_{S'}$-critical.
    \end{itemize}
    \end{lemma}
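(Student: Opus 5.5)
For (i), I would argue directly by construction, mimicking the upper-bound half of Lemma~\ref{lem:path1}~(ii). Take an $S'$-packing coloring $\varphi'$ of $C_n\colon u_1\dots u_n u_1$ with $k'=\chi_{S'}(C_n)$ colors. Let $C_{2n}\colon v_1\dots v_{2n}v_1$, and define $\varphi(v_i)=1$ for odd $i$ and $\varphi(v_i)=\varphi'(u_{i/2})+1$ for even $i$.

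Since $2n$ is even, the odd-indexed vertices form an independent set. Hence color~$1$ is valid, because $s_1=1$.

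The key observation is that for even indices $i,j$ we have $d_{C_{2n}}(v_i,v_j)=2\,d_{C_n}(u_{i/2},u_{j/2})$. Indeed, the cyclic index difference simply doubles. So if $\varphi(v_i)=\varphi(v_j)=\ell\ge 2$, then
$$d_{C_{2n}}(v_i,v_j)\ge 2(s'_{\ell-1}+1)=2\left(\left\lfloor \frac{s_\ell}{2}\right\rfloor+1\right)\ge s_\ell+1,$$
exactly as in the proof of Lemma~\ref{lem:path1}~(ii). Therefore $\varphi$ is an $S$-packing $(k'+1)$-coloring of $C_{2n}$, which gives (i).

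For (ii), I would first record a reduction. Every graph $H$ that is a subgraph of $G$ satisfies $\chi_S(H)\le\chi_S(G)$, since distances do not decrease when passing to a subgraph (and are infinite between components). Every proper subgraph of $C_m$ is a subgraph of $P_m$, obtained from $C_m$ by deleting one edge; removing a vertex leaves a subgraph of some such $P_m$. Hence $C_m$ is $\chi_S$-critical if and only if $\chi_S(P_m)<\chi_S(C_m)$, and otherwise $\chi_S(P_m)=\chi_S(C_m)$.

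Now I would argue by contraposition. Suppose $C_n$ is not $\chi_{S'}$-critical, so $\chi_{S'}(C_n)=\chi_{S'}(P_n)$. By (i) and Lemma~\ref{lem:path1}~(ii) applied with $2n$ in place of $n$ (so that $\lfloor 2n/2\rfloor=n$),
$$\chi_S(C_{2n})\le \chi_{S'}(C_n)+1=\chi_{S'}(P_n)+1=\chi_S(P_{2n}).$$
Together with $\chi_S(P_{2n})\le\chi_S(C_{2n})$, this gives equality. Hence $C_{2n}$ is not $\chi_S$-critical, which proves (ii).

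The only step needing care is the reduction of criticality of a cycle to the comparison with the path. This rests on subgraph monotonicity, which is routine; I expect no real obstacle beyond checking the distance-doubling claim on the cycle.
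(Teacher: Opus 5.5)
Your proposal is correct and follows the paper's proof: part (i) uses the identical construction and the same distance-doubling estimate, and part (ii) is just the contrapositive of the paper's chain $\chi_{S'}(P_n)=\chi_S(P_{2n})-1<\chi_S(C_{2n})-1\le\chi_{S'}(C_n)$, built from part (i) and Lemma~\ref{lem:path1}~(ii). You also state explicitly the reduction that $C_m$ is critical if and only if $\chi(P_m)<\chi(C_m)$, which rests on subgraph monotonicity; the paper uses this step without comment.
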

\begin{proof}
\begin{itemize}

    \item[(i)] Consider an $S'$-packing coloring $\phi'$ of $C_n$ with $k'= \chi_{S'}(C_n)$ colors. We construct a vertex coloring $\phi$ for $C_{2n}$ as in the proof of Lemma~\ref{lem:path1}~(ii). That is, let $G'=C_n\colon u_1\dots u_nu_1$ and $G=C_{2n} \colon v_1 \dots v_{2n}v_1$. If $i$ is odd, let $\phi(v_i)=1$. If $i$ is even, set $\phi(v_i)=\phi'(u_{i/2}) +1$. We state that $\phi$ is an $S$-packing coloring of $G$. The coloring condition clearly holds for the vertices with odd indices. If $i$ and $j$ are even integers and $\phi(v_i)=\phi(v_j)=\ell$, then $\phi'(u_{i/2})= \phi'(u_{j/2})= \ell-1$. Then
    $$d_G(v_i, v_j)= 2 d_{G'}(u_{i/2}, u_{j/2}) \ge 2 (s'_{\ell-1}+1) =  2 \left(\left\lfloor \frac{s_\ell}{2} \right\rfloor +1 \right)\ge s_\ell +1
    $$
    verifies that $\phi$ is an $S$-packing coloring of $G$ that uses $k'+1$ colors. Therefore, $\chi_S(C_{2n}) \leq \chi_{S'}(C_n)+1$. 
    \medskip

    \item[(ii)]  By Lemma~\ref{lem:path1} (ii), $\chi_S(P_{2n})=\chi_{S'}(P_n)+1$. If $C_{2n}$ is $\chi_S$-critical, then
    $$\chi_{S'}(P_n)= \chi_S(P_{2n}) -1 < \chi_S(C_{2n}) -1 \le \chi_{S'}(C_n).
    $$
    Therefore, $C_n$ is $\chi_{S'}$-critical as stated.
    \end{itemize}
\end{proof}

\section{$S$-packing chromatic number and criticality of paths} \label{sec:paths}
In this section we study the $S$-packing chromatic number and $\chi_S$-criticality of paths. In particular, we give exact formulas for $\chi_S(P_n)$ for every $n$ and $S \in \bigcup_{k=2}^\infty \cS(k)$, and identify all $\chi_S$-critical paths when $S$ belongs to this class. The section is concluded with the characterization of $\chi_S$-vertex-critical paths for each sequence $S \in \bigcup_{k=2}^\infty \cS(k)$. 

\begin{theorem}  \label{thm:path}
    Let $k \ge 2$ be an integer and $P_n$ be a path. If $S \in \cS(k)$, then 
    \begin{equation*} 
         \chi_S(P_n)= \min \{k,\lfloor \log_2(n) \rfloor +1\}.
    \end{equation*}   
    In particular, if $n \ge 2^{k-1}$ also holds, then $\chi_S(P_n)=k$. 
\end{theorem}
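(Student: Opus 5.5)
Induction on $k$, using the halving relation of Lemma~\ref{lem:path1}~(ii) to reduce from $\cS(k)$ to $\cS(k-1)$.

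\emph{Base case $k=2$.} Here $S\in\cS(2)=\cS_{1,1}$, so $s_1=s_2=1$. For $n=1$ one color suffices, matching $\lfloor\log_2 1\rfloor+1=1$. For $n\ge 2$ we need at least two colors since $P_n$ has an edge, and alternating colors $1,2$ is an $S$-packing coloring because $s_1=s_2=1$. Hence $\chi_S(P_n)=\min\{2,\lfloor\log_2 n\rfloor+1\}$.

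\emph{Inductive step.} Let $k\ge 3$ and $S=(1,s_2,\dots)\in\cS(k)$. For $n=1$ the claim is trivial, so assume $n\ge 2$. Define $S'$ by $s_i'=\lfloor s_{i+1}/2\rfloor$, as in Lemma~\ref{lem:path1}~(ii). We first check that $S'\in\cS(k-1)$.
\begin{itemize}
\item For $i\le k-2$ we have $2^{i}\le s_{i+1}<2^{i+1}$, so $2^{i-1}\le s_i'<2^i$.
\item From $s_k<2^{k-1}$ we get $s_{k-1}'=\lfloor s_k/2\rfloor<2^{k-2}$.
\item $S'$ is non-decreasing, and $s_1'=\lfloor s_2/2\rfloor\ge 1$ because $s_2\ge 2$.
\end{itemize}
So $S'$ is a valid packing sequence in $\cS(k-1)$. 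Lemma~\ref{lem:path1}~(ii) and the induction hypothesis then give
$$\chi_S(P_n)=\chi_{S'}(P_{\lfloor n/2\rfloor})+1=\min\{k-1,\lfloor\log_2\lfloor n/2\rfloor\rfloor+1\}+1 .$$
Since $\lfloor\log_2\lfloor n/2\rfloor\rfloor=\lfloor\log_2 n\rfloor-1$ for $n\ge 2$, the right-hand side equals $\min\{k,\lfloor\log_2 n\rfloor+1\}$.

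\emph{The final assertion.} If $n\ge 2^{k-1}$, then $\lfloor\log_2 n\rfloor+1\ge k$, so the minimum equals $k$.

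The only delicate points are bookkeeping. The first is verifying that $S'$ really lies in $\cS(k-1)$. In particular, the case $k=3$ needs $s_2\ge 2$ so that $s_1'\ge1$; this holds by the definition of $\cS(3)$. The second is the identity $\lfloor\log_2\lfloor n/2\rfloor\rfloor=\lfloor\log_2 n\rfloor-1$. It holds because $2^m\le n<2^{m+1}$ implies $2^{m-1}\le\lfloor n/2\rfloor<2^m$ when $m\ge1$. I expect no substantial obstacle, since the halving lemma carries the argument.
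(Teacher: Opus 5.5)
Your proof is correct and follows the paper's argument essentially step for step: induction on $k$ with base case $\cS(2)=\cS_{1,1}$, the halved sequence $s_i'=\lfloor s_{i+1}/2\rfloor$ shown to lie in $\cS(k-1)$, and Lemma~\ref{lem:path1}~(ii) to pass from $P_n$ to $P_{\lfloor n/2\rfloor}$. Beyond the paper, you treat $n=1$ separately (the lemma requires $n\ge 2$) and verify both $s_1'\ge 1$ and the identity $\lfloor\log_2\lfloor n/2\rfloor\rfloor=\lfloor\log_2 n\rfloor-1$, which the paper leaves implicit.
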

\begin{proof} Let $S=(1, s_2, \dots)$ be a packing sequence from $\cS(k)$. 
    The proof proceeds by induction on $k$. If $k=2$, then $S \in \cS_{1,1}$ and $ \chi_S(P_n)= 2$ if $n\ge 2$ and $ \chi_S(P_1)= 1$. We may now assume that $k \ge 3$.

    We set 
    $$ S'=\left( \left\lfloor \frac{s_2}{2} \right\rfloor, \left\lfloor \frac{s_3}{2} \right\rfloor, \dots \right) \quad \mbox{and} \quad n'= \left\lfloor \frac{n}{2} \right\rfloor.
    $$
    The condition  on  $s_2$  gives $2 \le s_2 <4$  and hence $\lfloor \frac{s_2}{2} \rfloor =1$. We infer that $S' \in \cS_1$. Similarly,  every entry $s_i'=\lfloor \frac{s_{i+1}}{2} \rfloor$ with $2 \le i \le k-2$ in $S'$ satisfies $2^{i-1} \leq s_i' < 2^i$, and $s_{k-1}' <2^{k-2}$. Then $S' \in \cS(k-1)$. Let $k'=k-1$. By the hypothesis, $\chi_{S'}(P_{n'})= \min \{k', \lfloor \log_2(n') \rfloor +1 \}$.       By Lemma~\ref{lem:path1}~(ii), $\chi_{S}(P_{n})= \chi_{S'}(P_{n'})+1$. Using also the definitions of $n'$ and $k'$, we conclude that
    \begin{align*}
        \chi_{S}(P_{n}) = \chi_{S'}(P_{n'})+1 &=\min \{k', \lfloor \log_2(n') \rfloor +1 \}+1\\
        &=  \min \{k-1, \lfloor \log_2(n)-1 \rfloor +1 \}+1\\
        &= \min \{k, \lfloor \log_2(n) \rfloor +1\}.
    \end{align*}
    This finishes the proof of the theorem.
\end{proof}
\begin{remark} 
       If $S \in \cS(k)$, then for every path $P_n\colon v_1 \dots v_n$, an $S$-packing coloring with $\chi_S(P_n)$ colors can be obtained in the following way. 
        \begin{equation} \label{eq:coloring}
            \phi(v_i)=     \begin{cases} 	j, &\text{if $ i \equiv 2^{j-1} \pmod{2^{j}}$ and $1 \leq j \leq k-1$},\\	
        k, &\text{if $ i \equiv 0 \pmod{2^{k-1}}$.}\\    \end{cases}  
        \end{equation}     
         Note that the color of a vertex $v_i$ can be easily identified if we consider the last non-zero digit in the binary form of $i$. It also shows that every vertex of $P_n$ is colored.  Further, for any two vertices sharing a color $j \leq k-1$, their distance is at least $2^j \ge s_j+1$; and if both vertices are colored with $k$, the distance is  at least $2^{k-1} \ge s_k+1$. If $n \ge 2^{k-1}$, then $\phi $ uses $k=\chi_S(P_n)$ colors; otherwise, the number of colors is $\lfloor \log_2(n) \rfloor +1=\chi_S(P_n)$.
     \end{remark}
     
For packing sequences not contained in $\bigcup_{k=2}^\infty \cS(k)$, we present the following results. Note that Theorem~\ref{thm:path} and Theorem~\ref{cor:path-lower-bound} (ii) together determine $\chi_S(P_n)$ for every $S=(1, s_2, \dots )$ satisfying $s_i <2^i$ for all $i \ge 2$.
\begin{theorem}
   \label{cor:path-lower-bound}
Let  $S=(1, s_2, \dots )$ be a packing sequence and $P_n$ be a path. 
\begin{itemize}
    \item[(i)] If there exists an index $i$ such that $s_i < 2^{i-1}$ and $k= \min\{ i \colon s_i < 2^{i-1}\}$, then 
   $$ \chi_S(P_n)\ge  \min \{k,\lfloor \log_2(n) \rfloor +1\}.
   $$    
   \item[(ii)] If $2^{i-1} \leq s_i < 2^i$ holds for every  $2 \le i \le \lfloor \log_2(n) \rfloor +1$, then $$ \chi_S(P_n)=  \lfloor \log_2(n) \rfloor +1.
   $$
   \item[(iii)] If $2^{i-1} \leq s_i $ holds for every  index $2 \le i \le \lfloor \log_2(n) \rfloor +1$, then $$ \chi_S(P_n)\ge   \lfloor \log_2(n) \rfloor +1.
   $$ 
   \end{itemize} 
   \end{theorem}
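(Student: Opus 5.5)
The plan is to derive all three parts from Theorem~\ref{thm:path} together with Observation~\ref{obs:basic}, by comparing $S$ with a suitable sequence from some $\cS(m)$.

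For (i), let $k=\min\{i \colon s_i<2^{i-1}\}$. Since $s_1=1=2^0$ we have $k\ge 2$, and $s_i\ge 2^{i-1}$ for every $2\le i\le k-1$. Define $T=(t_1,t_2,\dots)$ by $t_1=1$, $t_i=2^{i-1}$ for $2\le i\le k-1$, and $t_i=s_k$ for $i\ge k$. Then $T$ is non-decreasing: $t_{k-1}=2^{k-2}\le s_{k-1}\le s_k$. Moreover $t_i\le s_i$ for all $i$, using $s_i\ge s_k$ when $i\ge k$. Also $2^{i-1}\le t_i<2^i$ for $i\le k-1$ and $t_k=s_k<2^{k-1}$, so $T\in\cS(k)$. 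By Observation~\ref{obs:basic} and Theorem~\ref{thm:path}, $\chi_S(P_n)\ge\chi_T(P_n)=\min\{k,\lfloor\log_2 n\rfloor+1\}$.

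For (iii), set $m=\lfloor\log_2 n\rfloor+1$. The hypothesis gives $s_i\ge 2^{i-1}$ for $2\le i\le m$, and also for $i=1$.
\begin{itemize}
\item If some index $i$ has $s_i<2^{i-1}$, then the minimal such index $k$ satisfies $k\ge m+1$. Part (i) then gives $\chi_S(P_n)\ge\min\{k,m\}=m$.
\item Otherwise, choose $T$ with $t_1=1$, $t_i=2^{i-1}$ for $2\le i\le m$, and $t_i=2^{m-1}$ for $i>m$. Then $T$ is non-decreasing, $T\le S$ entrywise, and $T\in\cS(m+1)$, since $t_{m+1}=2^{m-1}<2^m$.
\end{itemize}
In the second case Theorem~\ref{thm:path} yields $\chi_T(P_n)=\min\{m+1,m\}=m$, and Observation~\ref{obs:basic} gives $\chi_S(P_n)\ge m$. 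The case $n=1$ (where $m=1$) is trivial.

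For (ii), the lower bound is (iii). For the upper bound, the plan is to use the explicit coloring of equation~\eqref{eq:coloring} restricted to $P_n$. A vertex $v_i$ with $i\le n<2^m$ has its last nonzero binary digit at position $j-1$ for some $j\le m$, and it receives color $j$. Two vertices with color $j$ differ by a multiple of $2^j$, so their distance is at least $2^j\ge s_j+1$ by the hypothesis $s_j<2^j$. This uses $m$ colors, so $\chi_S(P_n)\le m$.

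The only delicate point is checking that the comparison sequences are genuinely non-decreasing and lie in the appropriate $\cS(\cdot)$ class, in particular the boundary entries $t_{k-1}\le t_k$ and $t_{m+1}<2^m$. Everything else is immediate from Theorem~\ref{thm:path}.
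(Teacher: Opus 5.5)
Your proof is correct. Part (i) and the upper bound in (ii) match the paper: you dominate $S$ entrywise by a sequence in $\cS(k)$, and you use the binary coloring~\eqref{eq:coloring}. Setting the tail to $t_i=s_k$ instead of keeping $s_i$ for $i\ge k$ makes no difference.

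The real difference is the order in which (ii) and (iii) are proved. With $m=\lfloor\log_2 n\rfloor+1$, the paper proves the lower bound of (ii) directly. It lowers only $s_m$ to $s_{m-1}$, which places the sequence in $\cS(m)$, so Theorem~\ref{thm:path} gives $\min\{m,m\}=m$. It then obtains (iii) by lowering $s_2,\dots,s_m$ to $2^{i-1}$ and applying (ii). You prove (iii) first, with a comparison sequence in $\cS(m+1)$ for which Theorem~\ref{thm:path} gives $\min\{m+1,m\}=m$, and then read off the lower bound of (ii) from (iii).

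Your route makes (iii) a pure lower-bound argument that does not rely on the coloring in (ii). The paper's route makes (ii) self-contained and turns (iii) into a one-line corollary. Both are equally short.

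One simplification: the case split in (iii) is unnecessary. The inequality $t_i=2^{m-1}\le s_m\le s_i$ for $i>m$ follows from monotonicity of $S$ alone, so your $T$ works in every case without appealing to (i).
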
 
   \begin{proof}  
   \begin{itemize}
     \item[(i)] Let $S=(1,s_2, \dots)$ be a packing sequence that satisfies the conditions in (i), and define  $S'=(1,s_2', \dots) $ such that $s_i'= 2^{i-1}$ holds for every $1 \le i \le k-1$ and $s_j'= s_j$ for all $j \ge k$. In particular, $s_p \ge s_p'$ holds for every positive integer $p$. Observation~\ref{obs:basic} therefore implies $\chi_S(P_n) \ge \chi_{S'} (P_n)$. Since $S' \in \cS(k)$, Theorem~\ref{thm:path} gives $\chi_{S'}(P_n)=  \min \{k,\lfloor \log_2(n) \rfloor +1\}$ and statement (i) follows.

       \item[(ii)] Set $k= \lfloor \log_2(n) \rfloor +1$. Then $2^{k-1} \leq n < 2^{k}$.  First observe that the coloring~\eqref{eq:coloring} applied for the path $P_n\colon v_1 \dots v_n$ is an $S$-packing coloring of $P_n$ which uses $k$ colors. Consequently, $\chi_S(P_n) \leq k$.  On the other hand, define the packing sequence $S''= (1, s_2'', \dots)$ such that $s_i''=s_i$ for all indices except that $s_k''=s_{k-1}$. Hence $s_i'' \leq s_i$ holds for every $i$ and Observation~\ref{obs:basic} implies $\chi_{S''}(P_n) \leq \chi_{S}(P_n)$. By the conditions in (ii), Theorem~\ref{thm:path} can be applied to $S''$, $P_n$, and $k$. This yields 
        $$ \chi_{S''}(P_n)=  \min \{k,\lfloor \log_2(n) \rfloor +1\}. $$  
      Since $k= \lfloor \log_2(n) \rfloor +1$, we may derive 
      $$ \chi_{S}(P_n) \ge \chi_{S''}(P_n) =\lfloor \log_2(n) \rfloor +1 =k,
      $$
      which completes the proof for the equality in (ii).

       \item[(iii)] Under the conditions of (iii), we may define a packing sequence $S^*=(1, s_2^*,\dots) $, where $s_i^*=2^{i-1}$ for every index $2 \le i \le \lfloor \log_2(n) \rfloor +1$, and $s_i^*=s_i$ for $i > \lfloor \log_2(n) \rfloor +1$. By the equality stated in (ii), $\chi_{S^*}(P_n)  =\lfloor \log_2(n) \rfloor +1$. As $s_i^* \leq s_i$ for all indices, we may deduce statement~(iii) from Observation~\ref{obs:basic}.
   \end{itemize}
   \end{proof}

   Having Theorem~\ref{thm:path} and Theorem~\ref{cor:path-lower-bound} (ii) in hand, we are now able to identify the $\chi_S$-critical paths  for every packing sequence $S=(1,s_2, \dots )$ satisfying (i) $S \in \bigcup_{k=2}^\infty \cS(k)$ or (ii) $s_i < 2^i$  for each $i \ge 2$. 
 
 \begin{theorem} \label{thm:critical-path} Let $S=(1,s_2, \dots)$ be a packing sequence and $k \ge 2$ be an integer. 
 \begin{itemize}
     \item[(i)] If $S \in \cS(k)$, then a path $P_n$ is $\chi_S$-critical if and only if $n \in \{2^0, 2^1, \dots, 2^{k-1}\}$.
     \item[(ii)] If $2^{i-1} \le s_i < 2^i$ holds for every entry $s_i$, then a path $P_n$ is $\chi_S$-critical if and only if $n \in \{2^j \colon  j \in \mathbb{N}_{0}\}$.
 \end{itemize}
       
   \end{theorem}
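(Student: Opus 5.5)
The plan is to reduce $\chi_S$-criticality of a path to a statement about the number of colors of its proper subgraphs. A proper subgraph $H$ of $P_n$ is a disjoint union of paths, each of order at most $n$, and it has at most $n-1$ vertices or misses an edge. Since $\chi_S$ of a disjoint union is the maximum over its components, and $\chi_S(P_m)$ is non-decreasing in $m$, we get $\chi_S(H)=\chi_S(P_{n-1})$ for the maximal choice $H=P_{n-1}$ (delete an end vertex). Every component of any proper subgraph has order at most $n-1$. Hence $P_n$ is $\chi_S$-critical if and only if $\chi_S(P_{n-1})<\chi_S(P_n)$, with the convention that for $n=1$ the only proper subgraph is empty, with $\chi_S=0$. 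So $P_1$ is always critical. I would justify that removing a single edge of $P_n$ also leaves components of order at most $n-1$, and that deleting any vertex gives a subgraph of $P_{n-1}$.

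For (i), with $S\in\cS(k)$, Theorem~\ref{thm:path} gives $\chi_S(P_m)=\min\{k,\lfloor\log_2 m\rfloor+1\}$. The function $\lfloor\log_2 m\rfloor+1$ increases from $m-1$ to $m$ exactly when $m$ is a power of $2$. Therefore $\chi_S(P_{n-1})<\chi_S(P_n)$ holds precisely when $n=2^j$ and the value $j+1$ is at most $k$, i.e.\ $j\le k-1$. For $n>2^{k-1}$ both values equal $k$, so $P_n$ is not critical. This yields $n\in\{2^0,\dots,2^{k-1}\}$.

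For (ii), Theorem~\ref{cor:path-lower-bound}~(ii) applies to every $n$, since the hypothesis holds for all $i$. It gives $\chi_S(P_m)=\lfloor\log_2 m\rfloor+1$ for all $m\ge1$, and the same jump argument shows $P_n$ is critical if and only if $n$ is a power of $2$.

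The only mildly delicate step is the reduction to $P_{n-1}$. There I would use Observation-style monotonicity: $P_m$ is a subgraph of $P_{m+1}$ with the same distances between retained vertices, so the restriction of a coloring stays valid. I would also use the fact that distances inside a component of $H$ are at least the corresponding distances in $P_n$, so $\chi_S(H)\le\chi_S(P_{n-1})$. I expect no real obstacle beyond stating this cleanly.
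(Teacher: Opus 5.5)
Your proof is correct and follows essentially the same route as the paper. Both reduce $\chi_S$-criticality of $P_n$ to the strict inequality $\chi_S(P_{n-1})<\chi_S(P_n)$, and then read off where this jump occurs from the formulas in Theorem~\ref{thm:path} and Theorem~\ref{cor:path-lower-bound}~(ii). The only difference is that you prove the reduction once up front, by the same component-order argument the paper uses in Proposition~\ref{prop:critical-path}, whereas the paper argues via edge deletions in part (i) and uses the equivalence only implicitly in part (ii).
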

   \begin{proof}
       By definition, $P_1$ is $\chi_S$-critical for every $S \in \cS$. From now on, we consider only paths of order at least two.    
    \begin{itemize}
    \item[(i)] Let $S \in \cS(k)$ and $n \ge 2$. Assume first that the path $G=P_n\colon v_1\dots v_n$ is $\chi_S$-critical. By removing the edge $v_1v_2$ from $G$ we obtain $G-v_1v_2$, which is the disjoint union of an isolated vertex and a path of order $n-1$. From the $\chi_S$-criticality of $P_n$  we may infer
       $$\chi_S(P_n) > \chi_S(G-v_1v_2) = \max\{ \chi_S(P_1), \chi_S(P_{n-1})\} = \chi_S(P_{n-1}).
       $$
       By Theorem~\ref{thm:path}, the strict inequality holds if and only if $n \in \{2^1, 2^2, \dots, 2^{k-1}\}$. We conclude that it is a necessary condition for the $\chi_S$-criticality of $P_n$ if $n \ge 2$.

       Now assume that $n=2^{i}$, for an integer $1 \le i \le k-1$. By Theorem~\ref{thm:path}, $\chi_S(P_n)= i+1$. The removal of an edge $e$ from $G=P_n$ yields $G-e$, which consists of two paths $P_r$ and $P_q$ such that $r+q= n$. We may suppose that $r \le q$ and hence,  $\chi_S(G-e)= \chi_S(P_q)$. Observe that $q \leq 2^i-1$.
       Using the formula in  Theorem~\ref{thm:path} again, we obtain
       $$\chi(G-e)= \chi_S(P_q) < i+1 =  \chi_S(P_n)
       $$
        that shows the $\chi_S$-criticality of $P_n$, and the proof is complete for (i).

     \item[(ii)]  If $S=(1,s_2, \dots )$ is a packing sequence satisfying $2^{i-1} \leq s_i < 2^i$ for all $s_i$, we may use the formula $\chi_S(P_n)=  \lfloor \log_2(n) \rfloor +1$ from Theorem~\ref{cor:path-lower-bound} (ii). Then $\chi_S(P_{n-1}) < \chi_S (P_n)$ holds if and only if $n \in \{2^1, 2^2, \dots \}$ as stated. 
     \end{itemize}
   This finishes the proof of the theorem.
    \end{proof}
 Every $\chi_S$-critical graph $G$ is $\chi_S$-vertex-critical by definition. The following proposition shows that the reverse implication also holds if $G$ is a path.
  \begin{proposition} \label{prop:critical-path}
      Let $S$ be a packing sequence and $P_n$ be a path of order $n$. Then $P_n$ is $\chi_S$-vertex-critical if and only if it is $\chi_S$-critical.
  \end{proposition}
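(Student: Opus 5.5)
The implication ``$\chi_S$-critical $\Rightarrow$ $\chi_S$-vertex-critical'' holds for every graph, since $P_n - v$ is a proper subgraph of $P_n$. So the plan is to prove the converse for paths, using only the monotonicity of $\chi_S$ under taking subgraphs and the fact that $\chi_S$ of a disjoint union is the maximum over its components. This should work for an arbitrary packing sequence $S$, with no assumption such as $s_1=1$.

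First I would record monotonicity. If $H$ is a subgraph of $G$, then $d_H(u,v) \ge d_G(u,v)$ for all $u,v \in V(H)$ (with distance $\infty$ between different components). Hence the restriction of any $S$-packing coloring of $G$ to $V(H)$ is an $S$-packing coloring of $H$, and $\chi_S(H) \le \chi_S(G)$. For the disjoint union, vertices in different components are at infinite distance, so colorings of the components can be combined freely. Therefore $\chi_S$ of the union equals the maximum of $\chi_S$ over the components.

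Now assume $P_n\colon v_1\dots v_n$ is $\chi_S$-vertex-critical. The case $n=1$ is trivial, since the only proper subgraph is the null graph. Let $H$ be a proper subgraph of $P_n$; I will split into two cases.
\begin{itemize}
\item If $H$ misses some vertex $v$, then $H \subseteq P_n - v$, so $\chi_S(H) \le \chi_S(P_n - v) < \chi_S(P_n)$.
\item If $V(H)=V(P_n)$, then $H$ misses some edge $e$, so $H \subseteq P_n - e$. Here $P_n - e$ is the disjoint union of two paths $P_r$ and $P_q$ with $r+q=n$ and $r,q \le n-1$. Each of these paths is isomorphic to a subgraph of $P_{n-1} \cong P_n - v_n$. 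Hence
$$\chi_S(H) \le \chi_S(P_n-e) = \max\{\chi_S(P_r),\chi_S(P_q)\} \le \chi_S(P_n - v_n) < \chi_S(P_n).$$
\end{itemize}
In both cases $\chi_S(H) < \chi_S(P_n)$, so $P_n$ is $\chi_S$-critical.

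No step is really an obstacle. The only point needing care is the edge-deletion case, where $H$ keeps all vertices: there one must note that both components of $P_n - e$ embed into the single vertex-deleted subgraph $P_n - v_n$, and that $\chi_S$ of a disconnected graph is the maximum over its components.
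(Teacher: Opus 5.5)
Your proof is correct and follows essentially the same route as the paper: both use monotonicity of $\chi_S$ under subgraphs, the fact that $\chi_S$ of a disjoint union is the maximum over its components, and the fact that every component of a proper subgraph of $P_n$ is a path of order at most $n-1$, hence embeds in $P_n - v_n \cong P_{n-1}$. The only cosmetic difference is that the paper bounds every proper subgraph directly by $\chi_S(P_{n-1})$, so only the leaf deletion is ever used, whereas in your first case you invoke vertex-criticality at an arbitrary missing vertex.
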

  \begin{proof} By definition, it suffices to show that, for every $S \in \cS$ and $n \ge 2$, if  $P_n$ is $\chi_S$-vertex-critical, then it is $\chi_S$-critical.   
  The deletion of a leaf from $P_n$ gives $P_{n-1}$ and the $\chi_S$-vertex-criticality of $P_n$ implies $\chi_S(P_{n-1}) < \chi_S(P_n)$. On the other hand, every proper subgraph $H$ of $P_n$ consists of path components $H_1, \dots, H_\ell$, each of which is of order at most $n-1$. Since $\chi_S(H) = \max \{\chi_S(H_i) \colon 1 \le i \le \ell\}$, the inequality $\chi_S(H) \leq \chi_S(P_{n-1}) $ holds for every proper subgraph $H$. Therefore, $\chi_S(P_{n-1}) < \chi_S(P_n)$ implies $\chi_S(H) < \chi_S(P_n)$ for every $H \subset P_n$, and we may conclude that $P_n$ is $\chi_S$-critical.
  \end{proof}
  Theorem~\ref{thm:critical-path} and Proposition~\ref{prop:critical-path}  directly imply the following statement on the $\chi_S$-vertex-criticality of paths.
  \begin{corollary} \label{cor:vertex-crit-path}
     Let $S=(1,s_2, \dots)$ be a packing sequence and $k \ge 2$ an integer. 
 \begin{itemize}
     \item[(i)] If $S \in \cS(k)$, then a path $P_n$ is $\chi_S$-vertex-critical if and only if $n \in \{2^0, 2^1, \dots, \allowbreak 2^{k-1}\}$.
     \item[(ii)] If $2^{i-1} \le s_i < 2^i$ holds for every entry $s_i$, then a path $P_n$ is $\chi_S$-vertex-critical if and only if $n \in \{2^j \colon  j \in \mathbb{N}_{0}\}$.
 \end{itemize} 
  \end{corollary}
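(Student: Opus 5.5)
This is an immediate consequence of two earlier results, so the plan is simply to chain them. Proposition~\ref{prop:critical-path} says that for every packing sequence $S$ and every $n$, the path $P_n$ is $\chi_S$-vertex-critical if and only if it is $\chi_S$-critical. Hence, in each of the two settings of the corollary, the set of orders $n$ for which $P_n$ is $\chi_S$-vertex-critical coincides with the set of orders for which $P_n$ is $\chi_S$-critical. That latter set is given by Theorem~\ref{thm:critical-path}.

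For (i), assume $S\in\cS(k)$. Theorem~\ref{thm:critical-path}~(i) states that $P_n$ is $\chi_S$-critical exactly when $n\in\{2^0,2^1,\dots,2^{k-1}\}$. By Proposition~\ref{prop:critical-path}, the same characterization holds for $\chi_S$-vertex-criticality.

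For (ii), assume $2^{i-1}\le s_i<2^i$ for every $i$. Theorem~\ref{thm:critical-path}~(ii) gives $\chi_S$-criticality of $P_n$ exactly for $n\in\{2^j\colon j\in\mathbb{N}_0\}$. Proposition~\ref{prop:critical-path} then yields the claim for vertex-criticality.

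There is no real obstacle. The only thing to confirm is that Proposition~\ref{prop:critical-path} has no hypothesis on $S$ or $n$ beyond those used in the corollary. It is stated for an arbitrary packing sequence and covers $n=1$ trivially, since $P_1$ is both critical and vertex-critical by definition. So it applies verbatim in both cases.
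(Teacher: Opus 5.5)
Your proposal is correct and matches the paper exactly: the corollary is obtained by combining Theorem~\ref{thm:critical-path} with Proposition~\ref{prop:critical-path}. That proposition holds for every packing sequence and every order $n$, including the trivial case $n=1$.
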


   
\section{Results on $\chi_S$-critical cycles} \label{sec:cycles}

In this section, we characterize the $\chi_S$-critical cycles for each packing sequence $S \in \cS(4)$. Recall that the same problem was solved in~\cite{ekinci-2026} for packing sequences in $\cS(2) \cup \cS(3)$  (see Theorem~\ref{thm1}). 
We consider the case $S \in \mathcal S_{1,2,s_3,s_4}$ with $s_3,s_4 \in \{4,5,6\}$.
\begin{theorem} \label{thm-4.1}
The following statements hold.

    \begin{itemize} 
    \item[$(i)$] If $S \in \mathcal{S}_{1,2,4,4}$, then $C_n$ is $\chi_S$-critical if and only if $n \in \{3, 5, 6, 7, 9\}$.

    \item[$(ii)$] If $S \in \mathcal{S}_{1,2,\overline{4},5}$, then $C_n$ is $\chi_S$-critical if and only if $n \in \{3, 5, 6, 7, 9, 10, 11,\allowbreak 17\}$.
\item[$(iii)$] If $S \in \mathcal{S}_{1,2,\overline{4},6}$, then $C_n$ is $\chi_S$-critical if and only if $n \in \{3, 5, 6, 7, 9, 10, 11,\allowbreak 12, 13, 17, 18, 19, 20, 25, 26, 27, 33, 34, 41\}$.        
    \end{itemize}
\end{theorem}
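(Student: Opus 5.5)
The plan is to reduce criticality of $C_n$ to a strict inequality between $\chi_S(C_n)$ and $\chi_S(P_n)$, and then settle that inequality by explicit colorings for the non-critical lengths and by nonexistence arguments for the critical ones.

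\emph{Reduction.} Every proper subgraph of $C_n$ is a subgraph of $C_n-e\cong P_n$, so $C_n$ is $\chi_S$-critical if and only if $\chi_S(C_n)>\chi_S(P_n)$. In all three cases $S\in\cS(4)$, because $s_2=2$, $4\le s_3\le 7$ and $s_4<8$. Theorem~\ref{thm:path} therefore gives $\chi_S(P_n)=\min\{4,\lfloor\log_2 n\rfloor+1\}$.

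\emph{Small cases, $n\le 7$.}
\begin{itemize}
\item $C_3$ is critical, since it needs $3$ colors while $\chi_S(P_3)=2$.
\item $C_4$ is not critical, since $1,2,1,3$ is a valid coloring and $\chi_S(P_4)=3$.
\item For $n\in\{5,6,7\}$ I would show that $C_n$ has no $S$-packing $3$-coloring. Color $3$ can be used at most once, because $\mathrm{diam}(C_n)\le 3<s_3$. Color $2$ appears at most $\lfloor n/3\rfloor$ times and color $1$ at most $\lfloor n/2\rfloor$ times. This counting already fails for $n=5$ and $n=7$. For $n=6$, equality in the count would force color $1$ on alternate vertices, leaving three vertices pairwise at distance $2$ that would all need color $2$, which is impossible.
\end{itemize}
So for $n\le7$ the critical cycles are exactly $n\in\{3,5,6,7\}$, in all three parts.

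\emph{Upper bounds, $n\ge 8$.} Here $C_n$ is critical if and only if $C_n$ has no $S$-packing $4$-coloring. Write $N(S)$ for the set of lengths $n\ge8$ for which $C_n$ is $4$-colorable. To show the complement of each stated list lies in $N(S)$, I would use concatenation of blocks. A block is a word over $\{1,2,3,4\}$ that is an $S$-packing coloring of a cycle of its length. I would design a few blocks such that any concatenation of them is again a valid cycle coloring. The basic block is the period-$8$ word $1\,2\,1\,3\,1\,2\,1\,4$ from \eqref{eq:coloring}, which handles every $n\equiv0\pmod 8$ for all $s_3,s_4\le7$. I would add blocks of lengths such as $12$, $14$, $15$ and $16$, tailored to the value of $s_4$; for example, when $s_4\le 5$ one can place $4$'s at spacing $6$. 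Concatenations then produce every $n$ above a threshold, together with each sporadic non-critical value. Compatibility at the junctions has to be checked once per pair of blocks. Since the lists are finite (the largest critical lengths are $9$, $17$ and $41$), only finitely many lengths need individual colorings, and I would exhibit these explicitly.

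\emph{Lower bounds, critical $n\ge 8$.} For each listed $n$ I must show that $C_n$ has no $4$-coloring.
\begin{itemize}
\item Counting with $|\phi^{-1}(i)|\le\lfloor n/(s_i+1)\rfloor$ and $s_3=4$ (the worst case, by Observation~\ref{obs:basic}) handles many values directly.
\item For even $n=2m$, Lemma~\ref{lem:cycle}(ii) says that criticality of $C_{2m}$ implies criticality of $C_m$ for $S'=(1,2,\lfloor s_4/2\rfloor,\dots)$, which lies in $\cS_{1,2,2}$ or $\cS_{1,\overline{2},3}$. Combined with Theorem~\ref{thm1}, this restricts which even lengths can be critical and gives a consistency check on the lists. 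The actual lower bounds for even critical $n$ I would instead prove by showing that in any $4$-coloring color $1$ can be assumed to lie on alternate vertices; Lemma~\ref{lem:path1}-type shifting may work for cycles of even length. Then $\chi_S(C_{2m})=\chi_{S'}(C_m)+1$, and Theorem~\ref{thm1} applies to $C_m$.
\item For odd $n$, some two consecutive vertices both avoid color $1$. I would carry out a case analysis around such a pair, combined with the counting bounds.
\end{itemize}

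The main obstacle I expect is the borderline lengths in part (iii), such as $n=33$ and $41$ with $s_4=6$, where the counting bound is nearly tight. For these I would first bound the number of vertices colored $1$ and then analyze the resulting gaps between the $3$'s and $4$'s.
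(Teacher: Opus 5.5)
\textbf{The gap: lower bounds for critical $n\ge 8$.} Several parts of your plan are sound: the reduction to $\chi_S(C_n)>\chi_S(P_n)$, the treatment of $n\le 7$, and the block-concatenation plan for non-critical lengths. For (iii), blocks $1213124$ and $12131214$ give all $n=7k+8m$, as in the paper; (i) and (ii) need extra short blocks such as $12134$ and $121314$. What is missing is the other direction for $n\ge 8$: showing that $C_n$ has no $S$-packing $4$-coloring for the listed lengths. None of your three tools does this.

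Counting never suffices. With $s_3=4$, $\sum_{i=1}^{4}\lfloor n/(s_i+1)\rfloor\ge n$ holds for every listed $n\ge 8$. Equality occurs only at $n=9,11,13$, and the slack is large later, e.g.\ $46$ versus $41$ for $n=41$, $s_4=6$.

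Your even-cycle step is false. The reinsertion in Lemma~\ref{lem:path1}(i) builds an alternating coloring on $2p$ positions, where $p$ is the number of non-$1$ vertices, and then cuts back to $n$. This truncation is harmless for a path but destroys the wrap-around for a cycle. Indeed $\chi_S(C_{2m})=\chi_{S'}(C_m)+1$ fails, as two examples show.
\begin{itemize}
\item For $S\in\mathcal{S}_{1,2,4,4}$, the word $(12134)^2$ is a $4$-coloring of $C_{10}$. Yet $S'\in\mathcal{S}_{1,2,2}$ and $\chi_{S'}(C_5)=4$, so your formula would give $5$.
\item For $S\in\mathcal{S}_{1,2,4,6}$, the word $(1213124)^2$ is a $4$-coloring of $C_{14}$. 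Yet $S'\in\mathcal{S}_{1,\overline{2},3}$ and $C_7$ is $\chi_{S'}$-critical by Theorem~\ref{thm1}(iii).
\end{itemize}
Applied to (iii), your formula would make every even $n\ge 8$ with $n\not\equiv 0\pmod 8$ critical. That contradicts the statement you are proving.

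\textbf{Odd $n$.} A local case analysis around two adjacent non-$1$ vertices cannot produce a contradiction here; that worked in Theorems~\ref{thm-4.2}(iii) and~\ref{thm-4.3} only because those sequences are more rigid. For the present sequences many odd cycles are $4$-colorable: $n=13,15$ in (ii) and $n=15,21,23$ in (iii). Any valid argument must therefore be global, for instance showing that every $4$-coloring for $(1,2,4,6,\dots)$ splits into segments of lengths $7$ and $8$. You supply nothing of this kind.

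\textbf{What the paper does.} It does not attempt a structural proof. After the $7k+8m$ colorings, it determines $\chi_S(C_n)$ for the finitely many remaining lengths ($n\le 41$) by exhaustive computer search. It does this for representatives of $\mathcal{S}_{1,2,4,4}$, $\mathcal{S}_{1,2,4,5}$, $\mathcal{S}_{1,2,5,5}$ and $\mathcal{S}_{1,2,4,6}$. It then transfers the lower bounds to $s_3\in\{5,6\}$ in (iii) via Observation~\ref{obs:basic} and Theorem~\ref{thm:path}.

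Without such a computation or a genuinely global argument, your proposal establishes only the ``only if'' direction for $n\ge 8$.
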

\begin{proof} 
Let $S \in \mathcal S_{1,2,s_3,s_4}$, where $s_3, s_4 \in \{4, 5, 6\}$. Assume first that $n \ge 8$. Theorem \ref{thm:path} implies that $\chi_S(P_n)=4$ and hence $\chi_S(C_n)\ge 4$. 
    
 For $n \ge 8$ with $n \notin \{9,10,11,12,13,17,18,19,20,25,26,27,33,34,41\}$, we consider the following colorings of $C_n \colon v_1 \dots v_nv_1$. Each coloring starts at vertex $v_1$ and is given by a suitable prefix, followed by repetitions of the pattern $12131214$, chosen so that $v_n$ receives color $4$.
Let $n \equiv r \pmod {8}$. 
If $r=0$, we use the coloring $(12131214)^*$. 
If $1\le r\le 7$, we use the  coloring $(1213124)^{8-r}(12131214)^* .$ Note that such integers  are precisely those of the form $n=7k+8m$ for some $k,m \in \mathbb{N}_{0}$.
Consequently, $\chi_S(C_n)=4$ whenever $n=7k+8m$ with $k,m \in \mathbb{N}_{0}$.

For the remaining values of $n$, we determine $\chi_S(P_n)$ by Theorem~\ref{thm:path} and 
$\chi_S(C_n)$ by direct computation using the tool in \cite{Spackingtool} when $S \in \cS_{1,2,4,4} \cup \cS_{1,2,4,5} \cup \cS_{1,2,4,6} \cup \cS_{1,2,5,5}$; the results are summarized in 
Table~\ref{tab:smallcases}. Observe that $\chi_{S^*}(P_n) < \chi_{S^*}(C_n)$ holds for every value $n$ listed in the table when $S^* \in \cS_{1,2,4,6}$. We infer the same inequality for any packing sequence $S'$ from $\cS_{1,2,\overline{5},6}$. Indeed, we have $\chi_{S^*}(P_n) =\chi_{S'}(P_n)$ by Theorem~\ref{thm:path}, and Observation~\ref{obs:basic} implies $\chi_{S^*}(C_n) \leq \chi_{S'}(C_n)$. Therefore, exactly the same cycles are $\chi_S$-critical for a packing sequence from $\cS_{1,2,\overline{5},6}$ as for those from $\cS_{1,2,4,6}$.
\begin{table}[!ht]
\centering
\begin{tabular}{|c||c||c|c|c|c|}
\hline
$n$ & $\chi_{S_i}(P_n)$ 
& $\chi_{S_1}(C_n)$ 
& $\chi_{S_2}(C_n)$ 
& $\chi_{S_3}(C_n)$ 
& $\chi_{S_4}(C_n)$ \\
\hline
3  & 2 & 3 & 3 & 3 & 3 \\ \hline
4  & 3 & 3 & 3 & 3 & 3 \\ \hline
5  & 3 & 4 & 4 & 4 & 4 \\ \hline
6  & 3 & 4 & 4 & 4 & 4 \\ \hline
7  & 3 & 4 & 4 & 4 & 4 \\ \hline
9  & 4 & 5 & 5 & 5 & 5 \\ \hline
10 & 4 & 4 & 5 & 5 & 5 \\ \hline
11 & 4 & 4 & 5 & 5 & 5 \\ \hline
12 & 4 & 4 & 4 & 5 & 4 \\ \hline
13 & 4 & 4 & 4 & 5 & 4 \\ \hline
17 & 4 & 4 & 5 & 5 & 5 \\ \hline
18 & 4 & 4 & 4 & 5 & 4 \\ \hline
19 & 4 & 4 & 4 & 5 & 4 \\ \hline
20 & 4 & 4 & 4 & 5 & 4 \\ \hline
25 & 4 & 4 & 4 & 5 & 4 \\ \hline
26 & 4 & 4 & 4 & 5 & 4 \\ \hline
27 & 4 & 4 & 4 & 5 & 4 \\ \hline
33 & 4 & 4 & 4 & 5 & 4 \\ \hline
34 & 4 & 4 & 4 & 5 & 4 \\ \hline
41 & 4 & 4 & 4 & 5 & 4 \\
\hline
\end{tabular}
\caption{Values of $\chi_{S_i}(P_n)$ and $\chi_{S_i}(C_n)$ for small values of $n$ when $S_1 \in \mathcal{S}_{1,2,4,4}$, $S_2 \in \mathcal{S}_{1,2,4,5}$, $S_3 \in \mathcal{S}_{1,2,4,6}$, and  $S_4 \in \mathcal{S}_{1,2,5,5}$.}\label{tab:smallcases}
\end{table}

Therefore $\chi_S(P_n)<\chi_S(C_n)$ holds
precisely for the values of $n$ listed in the statement of the theorem. 
\end{proof}

The next theorem characterizes the $\chi_S$-critical cycles for the packing sequences 
$S \in \mathcal S_{1,3,s_3,s_4}$ with $s_3,s_4 \in \{4,5\}$.

\begin{theorem} \label{thm-4.2}
The following statements hold.

	\begin{itemize}
		
		\item[$(i)$] 	If $S \in \mathcal{S}_{1,3,4,4}$, then $C_n$ is $\chi_S$-critical if and only if $n \in \{3, 5, 6, 7, 9\}$.
		
		\item[$(ii)$] If $S \in \mathcal{S}_{1,3,4,5}$, then $C_n$ is $\chi_S$-critical if and only if $n \in \{3, 5, 6, 7, 9, 10, 11, \allowbreak 15,  17, 23\}$.
		
		\item[$(iii)$] 	If $S \in \mathcal{S}_{1,3,5,5}$, then $C_n$ is $\chi_S$-critical if and only if $n \in \{6 ,10\}$ or $n$ is odd.
	\end{itemize}
\end{theorem}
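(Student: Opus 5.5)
The proof will follow the pattern of Theorem~\ref{thm-4.1}. Every $S \in \mathcal{S}_{1,3,s_3,s_4}$ with $s_3,s_4\in\{4,5\}$ lies in $\cS(4)$, since $2\le 3<4$, $4\le s_3<8$ and $s_4<8$. Hence Theorem~\ref{thm:path} gives $\chi_S(P_n)=\min\{4,\lfloor\log_2 n\rfloor+1\}$. Because every proper subgraph of $C_n$ is a subgraph of $P_n$ and $P_n$ is itself a proper subgraph, $C_n$ is $\chi_S$-critical if and only if $\chi_S(P_n)<\chi_S(C_n)$. For $n\ge 8$ we have $\chi_S(P_n)=4$, so there the task is to decide exactly when $\chi_S(C_n)\ge 5$. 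For $3\le n\le 7$ the values $\chi_S(P_n)\in\{2,3\}$ are known, and $\chi_S(C_n)$ is found by direct inspection. Since $s_2\ge 3$ forbids color $2$ on two vertices at distance at most $3$, I expect $C_4$ to be non-critical and $C_3,C_5,C_6,C_7$ to need $4$ colors, hence to be critical.

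For the upper bounds I would build periodic $4$-colorings of $C_n$ from blocks. The basic block is $12131214$ of length $8$, which is valid for $s_2=3$ since the two $2$'s are $4$ apart, and valid for $s_3,s_4\le 7$.

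\textbf{(i)} In $\mathcal{S}_{1,3,4,4}$ the block $1213124$ of length $7$ also works cyclically. In it the two $2$'s are at distance $4$ (positions $2,6$), and color $3$ (distance $7$) and color $4$ (distance $7$) both clear $s=4$. At a junction of blocks, the $2$ at position $6$ of one block and the $2$ at position $2$ of the next are $3$ apart, which is the only delicate check. If that fails, I would use mixed blocks such as $121314$ of length $6$ or $12131412$ and search for a small generating set. Once blocks of lengths $a,b$ with $\gcd(a,b)=1$ are available, all $n\ge (a-1)(b-1)$ are covered, and only finitely many exceptions remain.

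\textbf{(ii)} In $\mathcal{S}_{1,3,4,5}$ color $4$ needs distance $6$, so the admissible block lengths shrink. The exceptions $10,11,15,17,23$ suggest generators $8$ and $12$ or $13$ together with a few others; I would find these by computer.

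\textbf{(iii)} In $\mathcal{S}_{1,3,5,5}$ the claim is that every odd cycle is critical, i.e.\ $\chi_S(C_n)\ge 5$ for all odd $n\ge 9$. For even $n\ne 6,10$ I would give $4$-colorings, likely from blocks of lengths $8$ and $12$ (or $14$) with color $1$ on alternate vertices. The even values $6,10$ are verified by computation.

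The lower bounds are the main obstacle, and the infinite family in (iii) is the hardest. For an odd cycle, colors $2,3,4$ have packing radii $3,5,5$, so each can be used at most $\lfloor n/4\rfloor$, $\lfloor n/6\rfloor$ and $\lfloor n/6\rfloor$ times. Color $1$ can be used at most $(n-1)/2$ times. The sum is
\[
\tfrac{n-1}{2}+\lfloor n/4\rfloor+2\lfloor n/6\rfloor ,
\]
which is at least $n$, so counting alone does not suffice. Refining it would go as follows. In an odd cycle some two consecutive vertices avoid color $1$. Deleting the color-$1$ vertices leaves runs, and I would analyse how colors $2,3,4$ can fill the gaps, aiming to show the density forces a contradiction. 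For this I would try the halving idea of Lemma~\ref{lem:path1}~(i) locally, recolouring so that color $1$ occupies every other vertex except at one defect, and then reducing to a cycle-like structure of half length with $S'=(1,2,2)$. That structure is constrained by Theorem~\ref{thm1}(ii) and by Lemma~\ref{lem:cycle}. For the finite exceptional lists in (i) and (ii), including $n=9$, the lower bounds are certified by exhaustive computation with the tool of~\cite{Spackingtool}, as in Table~\ref{tab:smallcases}. Finally, the fact that every large $n$ is representable from the block lengths shows that no further critical cycles exist.
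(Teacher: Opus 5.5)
There is a genuine gap at the one place where the statement needs an infinite family of lower bounds: $\chi_S(C_n)\ge 5$ for every odd $n$ when $S\in\mathcal{S}_{1,3,5,5}$. Your overall framework does match the paper: criticality reduces to $\chi_S(P_n)<\chi_S(C_n)$, Theorem~\ref{thm:path} gives $\chi_S(P_n)=4$ for $n\ge 8$, periodic block colorings supply the upper bounds, and computation handles finitely many $n$.

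The route you sketch for the odd-cycle bound does not work.
\begin{itemize}
\item The reinsertion trick of Lemma~\ref{lem:path1}(i) is a path argument. If $p$ vertices avoid color $1$, reinserting alternating $1$'s with one defect gives a cycle of length $2p-1$. That exceeds $n$ unless color $1$ already occupies $(n-1)/2$ vertices. On a cycle you cannot discard the surplus as one does at the end of a path.
\item Even granting a single-defect structure, Lemma~\ref{lem:cycle} and Theorem~\ref{thm1}(ii) give no obstruction. The lemma concerns only even cycles and yields upper bounds. Theorem~\ref{thm1}(ii) says $\chi_{S'}(C_m)=3$ for all $m\ge 4$, $m\neq 5$, when $S'\in\mathcal{S}_{1,2,2}$.
\end{itemize}
So the contradiction must come from the defect itself, which you never analyse.

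The missing idea is local. Take a $4$-coloring with the maximum number of vertices colored $1$. Every three consecutive vertices then contain a $1$, since otherwise you could recolor the middle one with $1$. Because $n$ is odd, some adjacent pair of non-$1$ vertices is flanked by $1$'s. With $s_2=3$ and $s_3=s_4=5$, each possible pair is ruled out by forced propagation:
\begin{itemize}
\item Pair $\{2,3\}$: in $1\,2\,3\,1$ the outer vertex on the $2$-side must get $4$. The outer vertex on the $3$-side is then adjacent to a $1$ and within distance $3$ of the $2$, $2$ of the $3$, and $5$ of the $4$, so it has no color.
\item Pair $\{2,4\}$: symmetric, since $s_3=s_4$.
\item Pair $\{3,4\}$: $4\,3\,1$ forces the next two vertices to be $2\,1$, after which the following vertex has no color.
\end{itemize}

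The upper bounds for the infinite families are also only promised, and the one concrete block you give for (i) is invalid. In $1213124$, whether repeated or alone on $C_7$, two $2$'s end up at distance $3\le s_2$. Your fallback blocks $121314$ and $12131412$ have even length, so they never reach odd $n$. Blocks that do work:
\begin{itemize}
\item For (i), the length-$5$ block $13214$ together with $12131214$ covers all $n=5k+8m$. The Frobenius exceptions $9,11,12,14,17,19,22,27$ go to Table~\ref{tab:smallcases1}.
\item For (ii), the $7$-block $1213124$ can be used only once, as a defect inside $(131214)^m$ with $m\ge 1$. Adding zero, one or two copies of $13121412$ reaches every odd residue modulo $6$. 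Even $n\ge 12$ come from $(131214)^k(12131214)^m$.
\item For (iii), the same $6$- and $8$-blocks give every even $n\ge 8$ except $10$. The lengths $8,12,14$ you suggest miss, for example, $n=18$.
\end{itemize}
In every case you must check the junctions between different blocks, not just each block repeated on its own.

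A minor slip: $\chi_S(C_3)=3$ and $\chi_S(C_7)=5$, not $4$. This does not affect the criticality of these cycles.
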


\begin{proof}
	Let $S \in \mathcal S_{1,3,s_3,s_4}$, where $s_3, s_4 \in \{4, 5\}$. Theorem \ref{thm:path} implies that $\chi_S(P_n)=4$ for $n\geq 8$. Thus, we have $\chi_S(C_n)\ge 4$. Let $C_n \colon v_1v_2 \dots v_nv_1$.
	\begin{itemize} 
		
	\item[$(i)$]  
	For  $S \in \mathcal{S}_{1,3,4,4}$ and $n \ge 8$  with $n \notin \{9,11,12,14,17,19,22,27\}$, we consider the following colorings of $C_n$. 
	Let $n \equiv r \pmod {8}$. 
	If $r=0$, we use the coloring $(12131214)^*$. If $r\neq0$ and $n=5k+8m$ for some $k,m \in \mathbb{N}_{0}$, then we set  $(13214)^k(12131214)^m.$ Observe that each value of $n$ considered now can be written in the form 
$n = 5k + 8m$ for some $k, m \in \mathbb{N}_{0}$, and  consequently $\chi_S(C_n)=4$.
	
    For the remaining values,  
	$\chi_S(C_n)$ can be determined by direct computation~\cite{Spackingtool}; the results are summarized in 
	Table~\ref{tab:smallcases1}.

\begin{table}[ht!]

\centering
\begin{tabular}{|c||c||c|c|c|c|}
\hline
$n$ & $\chi_{S_i}(P_n)$ 
& $\chi_{S_1}(C_n)$ 
& $\chi_{S_2}(C_n)$ 
& $\chi_{S_3}(C_n)$  \\
\hline
3  & 2 & 3 & 3 & 3 \\ \hline
4  & 3 & 3 & 3 & 3 \\ \hline
5  & 3 & 4 & 4 & 4 \\ \hline
6  & 3 & 4 & 4 & 4 \\ \hline
7  & 3 & 5 & 5 & 5 \\ \hline
9  & 4 & 5 & 5 & 5\\ \hline
10 & 4 & 4 & 5 & 5 \\ \hline
11 & 4 & 4 & 5 & 6 \\ \hline
12 & 4 & 4 & 4 & 4 \\ \hline
14 & 4 & 4 & 4 & 4 \\ \hline
15 & 4 & 4 & 5 & 5 \\ \hline
17 & 4 & 4 & 5 & 5 \\ \hline
19 & 4 & 4 & 4 & 5 \\ \hline
22 & 4 & 4 & 4 & 4 \\ \hline
23 & 4 & 4 & 5 & 5 \\ \hline
27 & 4 & 4 & 4 & 5 \\ 
\hline
\end{tabular}
\caption{Values of $\chi_{S_i}(P_n)$ and $\chi_{S_i}(C_n)$ for small values of $n$ when $S_1 \in \mathcal{S}_{1,3,4,4}$, $S_2 \in \mathcal{S}_{1,3,4,5}$ and $S_3 \in \mathcal{S}_{1,3,5,5}$.}\label{tab:smallcases1}
\end{table}

		\item[$(ii)$] Let $S \in \mathcal{S}_{1,3,4,5}$. 
		If  $n$ is even with $n \geq 12$, then $n= 6k+8m$ for some $k,m \in \mathbb{N}_{0}$. Thus, we can use the coloring $(131214)^{k}(12131214)^m.$  Consequently, $\chi_S(C_n)=4$ for these cases. Let $n$ be odd and $n \notin \{3, 5, 7, 9, 11, 15, 17, 23\}$. Then $n$ can be written in one of the following forms: (a) $n=7+6m$, (b) $15+6m$, (c) $23+6m$ so that $m \in \mathbb{N}_{0}$  so that $m \in \mathbb{N}^+$. In case of (a) we color $C_n$ as $(1213124)(131214)^m.$ In case of (b) we color $C_n$ as \[ (1213124)(13121412)(131214)^m.\] In case of (c) we color $C_n$ as  \[(1213124)(13121412)^2(131214)^m.\]
        For the remaining values of $n$,  
		$\chi_S(C_n)$ is determined by direct computation \cite{Spackingtool}; the results are summarized in 
		Table~\ref{tab:smallcases1}.   
		
		\item[$(iii)$] 
		Let $S \in \mathcal{S}_{1,3,5,5}$. If  $n$ is even with $n \geq 8$ and $n\neq 10$, then $n= 6k+8m$ for some $k,m \in \mathbb{N}_{0}$. We define the same coloring as in the proof of (ii) and color $C_n$ with the pattern $(131214)^{k}(12131214)^m.$  Consequently, $\chi_S(C_n)=4$ for these cases. On the other hand, Table~\ref{tab:smallcases1} shows that $C_{10}$ is $\chi_S$-critical as $\chi_S(C_{10})=5$.

It remains to show that $\chi_S(C_n)\ge 5$ holds if $n $ is odd and $S \in \cS_{1,3,5,5}$. Suppose to the contrary that $\phi$ is an $S$-packing coloring of $C_n$ with four colors. Let $\phi$ be chosen such that it contains the maximum number of vertices colored with~$1$ among such colorings of $C_n$. We prove four claims to get a contradiction.

Observe that if one vertex receives color $1$ for every pair of consecutive vertices on the cycle, then the colors on the cycle alternate between $1$ and a color different from $1$. This implies that the cycle has even length, contradicting the assumption that $n$ is odd. Hence there exist two adjacent vertices $x$ and $y$ in the cycle such that $\phi(x) \neq 1 \neq \phi(y)$. 

\textbf{Claim 1.} From every three consecutive vertices $x$, $y$, and $z$, at least one is colored with $1$. \\
   \noindent \textit{Proof.}
  Suppose to the contrary that none of the consecutive vertices $x$, $y$, $z$ is assigned color $1$.  In this case, we may recolor the middle vertex $y$ with color~$1$  and obtain an $S$-packing coloring where the number of vertices receiving color~$1$ is greater than in $\phi$. This contradicts the choice of $\phi$ having the maximum possible number of vertices colored with $1$ among such colorings of $C_n$. \smallqed

    \textbf{Claim 2.} There are no two neighbors $x$ and $y$ in the cycle such that $\phi(x)=2$ and $\phi(y)=3$.\\
   \noindent \textit{Proof.}
Suppose for a contradiction that $\phi(x)=2$ and $\phi(y)= 3$. We may assume without loss of generality that $x= v_3$ and $y=v_4$. By Claim~1, $\phi(v_2)=\phi(v_5)=1$. Then, the only possibility for $v_1$ to be colored is $\phi(v_1)=4$. However, $v_6$ cannot be assigned colors $1$, $2$, $3$, and since $d(v_1, v_6) <6$, it also cannot receive color~$4$, a contradiction. \smallqed

\textbf{Claim 3.} There are no two neighbors $x$ and $y$ in the cycle such that $\phi(x)=2$ and $\phi(y)= 4$.\\
   \noindent \textit{Proof.}
 Since colors $3$ and $4$ have the same distance requirements, this statement is equivalent to Claim~2.  \smallqed

\textbf{Claim 4.} There are no two neighbors $x$ and $y$ in the cycle such that $\phi(x)=4$ and $\phi(y)= 3$.\\
   \noindent \textit{Proof.}
Suppose to the contrary that $\phi(v_1)=4$ and $\phi(v_2)= 3$. Again, Claim~1 implies $\phi(v_3)=1$. Then the coloring constraint requires that $\phi(v_4)=2$ and $\phi(v_5)=1$. Furthermore, $v_6$ can get neither of the colors $1$, $2$, $3$, $4$, contradiction. \smallqed

Claims~1-4 exclude the presence of two adjacent vertices $x$ and $y$ satisfying $\phi(x) \neq 1 \neq \phi(y)$,  a contradiction.
Therefore, if $n$ is odd and  $S \in \mathcal{S}_{1,3,5,5}$, then $\chi_S(C_n) \ge 5$. 
\end{itemize}

 For $3 \leq n \leq 7$ and $S \in \mathcal{S}_{1,3,4,4} \cup \cS_{1,3,\overline{4},5}$, the $\chi_S$-criticality of $C_n$ can be checked in Table~\ref{tab:smallcases1}.
\end{proof}

The following theorem characterizes the $\chi_S$-critical cycles for all packing sequences 
from $\mathcal{S}_{1,2,\overline{4},7} \cup \mathcal{S}_{1,3,\overline{4},7} \cup \mathcal{S}_{1,3,\overline{5},6}$.

\begin{theorem}  \label{thm-4.3}
      If $S \in \mathcal{S}_{1,2,\overline{4},7} \cup \cS_{1,3,\overline{4},7} \cup \cS_{1,3,\overline{5}, 6}$, then $C_n$ is  $\chi_S$-critical if and only if   $n \not\equiv 0 \pmod{8}$ and $n \neq 4$.  
    \end{theorem}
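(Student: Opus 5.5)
The plan is to compare $\chi_S(C_n)$ with $\chi_S(P_n)$, since $C_n$ is $\chi_S$-critical exactly when $\chi_S(P_n)<\chi_S(C_n)$. Every proper subgraph of $C_n$ is contained in $P_n$, and $P_n$ is itself a proper subgraph, so criticality reduces to this strict inequality. For every sequence in the given union we have $S\in\cS(4)$. Theorem~\ref{thm:path} then gives $\chi_S(P_n)=\min\{4,\lfloor\log_2 n\rfloor+1\}$, so $\chi_S(P_n)=4$ for $n\ge 8$. The small cases $3\le n\le 7$ are settled by the values already known in the proof of Theorem~\ref{thm-4.1}, Table~\ref{tab:smallcases1}, and Observation~\ref{obs:basic}:
\begin{itemize}
\item $C_3$ needs $3>2$ colors;
\item $C_4$ is colored $1213$ with $3=\chi_S(P_4)$ colors, so it is not critical;
\item $C_5,C_6,C_7$ need at least $4>3$ colors.
\end{itemize}
Monotonicity in $S$ (Observation~\ref{obs:basic}) lets us push lower bounds upward from the minimal members $\cS_{1,2,4,7}$, $\cS_{1,3,4,7}$, $\cS_{1,3,5,6}$. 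It also lets us push upper-bound colorings downward from the maximal members $(1,2,7,7)$, $(1,3,7,7)$, $(1,3,6,6)$.

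For $n\equiv 0\pmod 8$ the coloring $(12131214)^*$ of $C_n$ is valid. Color~$2$ repeats at distance $4>3$, while colors $3$ and $4$ repeat at distance $8>7$, even around the wrap. This holds for every $S$ in the union, so $\chi_S(C_n)=4=\chi_S(P_n)$ and $C_n$ is not critical.

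The main step is showing $\chi_S(C_n)\ge 5$ whenever $n\ge 9$ and $8\nmid n$. It suffices to prove this for the weakest sequences, $(1,2,4,7,\dots)$, $(1,3,4,7,\dots)$ and $(1,3,5,6,\dots)$. I would mimic the extremal argument of Theorem~\ref{thm-4.2}(iii). Suppose $\phi$ is an $S$-packing $4$-coloring maximizing the number of $1$'s. Then among any three consecutive vertices one has color~$1$. Next I would classify the non-$1$ vertices. Let the gaps between consecutive $1$'s be $1$ or $2$ non-$1$ vertices, and read the word formed by the non-$1$ colors in cyclic order.

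The constraints are:
\begin{itemize}
\item consecutive $2$'s are at distance at least $4$ (or $5$ when $s_2=3$);
\item color~$3$ repeats at distance at least $5$ (or $6$);
\item color~$4$ repeats at distance at least $8$ (or $7$ for the third sequence).
\end{itemize}
From these I would show that the forced structure is rigid: a local window analysis around a $4$ shows that between two consecutive $4$'s the pattern must be exactly $1213121$ (up to reflection) with gap exactly $8$. For the third sequence, with $s_3=5$ and $s_4=6$, the roles of $3$ and $4$ are nearly symmetric. There I would show that the sequence of $\{3,4\}$-colored vertices together with the $2$'s forces period $8$.

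Concretely, I would prove claims analogous to Claims~1--4. The first is that two adjacent non-$1$ vertices cannot occur, which again uses a six-vertex window argument forcing a color conflict. Hence all $1$'s sit at alternate positions and $n$ is even. Writing $n=2m$, the even positions carry a cyclic $3$-coloring of $C_m$ with halved distances, i.e.\ an $S'$-coloring with $S'=(1 \text{ or } 2, 2 \text{ or } 3, 3,\dots)$. This is where Lemma~\ref{lem:cycle}'s correspondence applies in reverse. By Theorem~\ref{thm1}(iii) (and the $\cS_{1,2,2}$ analogue), such a $3$-coloring of $C_m$ exists only when $4\mid m$, giving $8\mid n$. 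The hardest part will be the adjacency-exclusion claims, especially for $s_2=2$, where $2$'s are relatively dense and more local patterns must be ruled out. For odd $n$ the parity argument is immediate once those claims hold. If the local case analysis becomes unwieldy for some $n$, I would fall back on verifying the remaining small $n$ computationally~\cite{Spackingtool}, as done in Theorems~\ref{thm-4.1} and~\ref{thm-4.2}.
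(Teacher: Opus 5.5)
Your architecture is the paper's. You take a $4$-coloring with the maximum number of $1$'s, get a $1$ among any three consecutive vertices, and exclude adjacent non-$1$ pairs. In the alternating case you halve to $C_{n/2}$ and apply Theorem~\ref{thm1}(iii). The paper runs the halving first as its Claim~1, and your order is just the contrapositive. The small cases, the $(12131214)^*$ coloring and the reduction to minimal sequences are fine. Note that $(1,3,4,7,\dots)$ need not be treated separately, since it dominates $(1,2,4,7,\dots)$.

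\textbf{The gap.} The step that carries the whole proof is the exclusion of adjacent pairs with colors $\{2,3\}$, $\{2,4\}$, $\{3,4\}$. You only assert it, and the method you describe would not work.

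Your constraint for color~$2$ is off by one: $s_2=2$ forces distance $\ge 3$ (not $\ge 4$), and $s_2=3$ forces $\ge 4$ (not $\ge 5$). With that error the pair $2,3$ for $(1,2,4,7,\dots)$ does die in a six-vertex window. But you would then in effect be proving the bound for $(1,3,4,7,\dots)$, which does not transfer down to $(1,2,4,7,\dots)$.

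With the correct constraints, six-vertex windows do not suffice:
\begin{itemize}
\item Pair $2,3$ for $(1,2,4,7,\dots)$: from $v_2,\dots,v_5=1,2,3,1$, the forced string $v_1,\dots,v_7=4,1,2,3,1,2,1$ is admissible, and the contradiction only appears at $v_8$.
\item Pair $4,2$ for $(1,2,4,7,\dots)$: from $1,4,2,1$ you get $v_6=3$, but the packing constraints allow $v_7=2$ (distance $3$ from the $2$ at $v_4$). You must first prove the $\{2,3\}$ exclusion and invoke it to force $v_7=1$. Then $v_8=2$, $v_9=1$, and only $v_{10}$ fails.
\item Pair $4,3$ for $(1,3,5,6,\dots)$: the string $2,1,4,3,1,2,1$ is admissible, so this pair also needs $v_8$.
\end{itemize}
The missing content is this ordered chain of longer window arguments, with one exclusion used inside another (the paper's Claims~3--5). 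A computer check of small $n$ cannot substitute for it, since the claims must hold for all $n$.

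\textbf{A minor point.} After halving, the sequence is exactly $S'=(1,2,3,\dots)$ in every case: even distances $\ge 4,6,8$ become $\ge 2,3,4$. So Theorem~\ref{thm1}(iii) applies directly. The ``$\cS_{1,2,2}$ analogue'' would be useless anyway: by Theorem~\ref{thm1}(ii), such sequences give $\chi_{S'}(C_m)=3$ for every $m\ge 4$, $m\ne 5$, so no divisibility conclusion follows.
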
  
     \begin{proof}
    Let $S \in \mathcal{S}_{1,2,\overline{4},7} \cup \cS_{1,3,\overline{4},7} \cup \cS_{1,3,\overline{5}, 6}$ and $n \ge 8$. By Theorem~\ref{thm:path},  $\chi_S(P_n) =4$.
  If $n \equiv 0 \pmod{8}$, the cycle $C_n$ may be colored by using the pattern $(12131214)^*$ that shows $\chi_S(C_n)\le 4$. Since $P_n$ is a subgraph of $C_n$, the inequality $\chi_S(C_n) \ge \chi_S(P_n)=4$ also holds and proves $\chi_S(C_n)=4$. 
   Consequently, $C_n$ is not $\chi_S$-critical if $n \equiv 0 \pmod{8}$.
 \medskip

  It suffices to show that $\chi_S(C_n)\ge 5$ holds if $n \not\equiv 0 \pmod{8}$. First, we assume that $S \in \cS_{1,2,4,7} \cup \cS_{1,3,5,6}$ and suppose for a contradiction that $\phi$ is an $S$-packing coloring of $C_n$ with $4$ colors. We may further suppose that by $\phi$ the maximum possible number of vertices are assigned color~$1$ among such colorings of $C_n$. The proof continues with a sequence of claims which show that the supposition of $\phi$ using only four colors leads to a contradiction, no matter whether $S \in \cS_{1,2,4,7}$ or $S \in \cS_{1,3,5,6}$. 

 \textbf{Claim 1.}   There are two consecutive vertices in the cycle such that neither of them is assigned  color~$1$. \\
 \noindent \textit{Proof.}
 Suppose that the statement is not true. Then $n$ is even and every second vertex of $C_n$ receives color~$1$. Let $V(C_n)= \{v_1, \dots, v_n\}$ and $\phi(v_i)=1$ for every odd index $i$.
  Similarly to the proof of Lemma~\ref{lem:cycle} (i), consider the cycle $G'=C_{n/2}$ on the vertex set $V(G')=\{v_2, v_4, \dots,  v_{n}\}$ and the coloring $\phi'$ of $G'$  such that $\phi'(v_i)=\phi(v_i)-1$ for every  $v_i \in V(G')$. Let $S'$ be an arbitrary packing sequence from $\cS_{1,2,3}$. 
  
  We prove that $\phi'$ is an $S'$-packing coloring. If $\phi'(v_i)=\phi'(v_j)= 1$ holds for $v_i, v_j \in V'$, then $\phi(v_i)=\phi(v_j)=2$ and, as $s_2\ge 2$, the distance $d$ of $v_i$ and $v_j$ is at least $3$ in $C_n$. In fact, as every second vertex is assigned color~$1$, we have $d \ge 4$. Therefore, the distance of $v_i$ and $v_j$ 
   in $C_{n/2}$ is at least $2$. Similarly, we can derive that $\phi'(v_i)=\phi'(v_j)= 2$ implies that the distance  
     of $v_i$ and $v_j$ is at least $6$ in $C_n$, and 
   at least $3$ in $C_{n/2}$. Moreover,  $\phi'(v_i)=\phi'(v_j)= 3$ implies that the distance is at least $4$. Hence, $\phi'$ is an $S'$-packing coloring of $C_{n/2}$ and $\chi_{S'}(C_{n/2}) \leq 3 $. As $n/2 \ge 4$ and $S' \in \cS(3)$, Theorem~\ref{thm:path} gives $\chi_{S'}(P_{n/2})=3$ showing that  $C_{n/2}$ is not $S'$-packing critical. By Theorem~\ref{thm1} (iii), this implies $n/2 \equiv 0 \pmod{4}$ which is impossible according to our assumption $n \not\equiv 0 \pmod{8} $. \smallqed

  \textbf{Claim 2.} From every three consecutive vertices, at least one receives color~$1$. \\
   \noindent \textit{Proof.}
   The argument is analogous to that in the proof of Theorem~\ref{thm-4.2}.
    That is, supposing that neither of $v_i$, $v_{i+1}$, $v_{i+2}$ is assigned  color~$1$, we may recolor the middle vertex $v_{i+1}$ with color~$1$ and obtain an $S$-packing coloring. This contradicts the choice of $\phi$.  \smallqed
   \medskip
   
Claims~1 and 2 imply that there exist four consecutive vertices, say $v_2$, $v_3$, $v_4$, $v_5$, such that $\phi(v_2)=\phi(v_5)=1$ and $\phi(v_3)\neq 1 \neq \phi(v_4)$.  
  
\textbf{Claim 3.} 
It is not possible that $\phi(v_3)=2$ and $\phi(v_4)= 3$.\\
 \noindent \textit{Proof.}
First we consider the case with $S \in \cS_{1,2,4,7}$. Suppose for a contradiction that $\phi(v_3)=2$ and $\phi(v_4)= 3$. We know that $\phi(v_2)=\phi(v_5)=1$. Then, the only possibility for $v_1$ to be colored is $\phi(v_1)=4$. Similarly, we may conclude that $\phi(v_6)= 2$ and $\phi(v_7)=1$. However, $v_8$ cannot be assigned colors $1$, $2$, $3$, and since $d(v_1, v_8) <8$, it also cannot get color~$4$. 

Now we continue with the remaining case where $S \in \cS_{1,3,5,6}$. Suppose, to the contrary of the statement, that $\phi(v_3)=2$ and $\phi(v_4)= 3$. We know that $\phi(v_2)=\phi(v_5)=1$. Since here $s_2=3$, the only possibility for both $v_1$ and $v_6$ is such that $\phi(v_1)=\phi(v_6)=4$.  However, $v_1$ and $v_6$ cannot both be colored $4$ since $d(v_1,v_6)=5<7$.  
These contradictions prove Claim~3. \smallqed

\textbf{Claim 4.} It is not possible that $\phi(v_3)=4$ and $\phi(v_4)= 2$.\\
 \noindent \textit{Proof.}
Suppose that the claim is not true and $\phi(v_3)=4$, $\phi(v_4)= 2$. Since $\phi(v_2)=\phi(v_5)=1$, $s_1=1$, $s_2 \ge 2$, and $s_4 \ge 6$, the coloring condition implies $\phi(v_6)=3$.

If $S \in \cS_{1,3,5,6}$, then $v_1$ cannot receive color $1$, $2$, or $4$, and hence $\phi(v_1)=3$. 
Since $d(v_1, v_6)=5$, it directly gives the desired contradiction.

If $S \in \cS_{1,2,4,7}$, then $\phi(v_5)=1$ and $\phi(v_6)=3$ remains true and we continue by considering the color of $v_7$. As $\phi(v_3)=4$ and $\phi(v_6)=3$, colors $3$ and $4$ cannot be assigned to $v_7$. By Claim~3, $\phi(v_7) \neq 2$. 
Therefore $\phi(v_7)=1$, and then $\phi(v_8)=2$. The only possibility for $v_9$ is to set $\phi(v_9)=1$. However, neither of the four colors can be assigned to $v_{10}$, a contradiction. \smallqed

\textbf{Claim 5.} It is not possible that $\phi(v_3)=4$ and $\phi(v_4)= 3$.\\
 \noindent \textit{Proof.}
Suppose for a contradiction that $\phi(v_3)=4$, and $\phi(v_4)= 3$. Then  $\phi(v_2)=\phi(v_5)=1$ and, since $s_3 \ge 4$ and $s_4 \ge 6$, the coloring constraint requires that $\phi(v_6)=2$ and $\phi(v_7)=1$. Furthermore, $v_8$ can get neither of the colors $1$, $2$, $3$, $4$, a contradiction. \smallqed
\medskip

Claims 1-5 together give a contradiction when $S \in \cS_{1,2,4,7} \cup \cS_{1,3,5,6}$. Therefore, $\chi_S(C_n) \ge 5$ holds for every $S \in \cS_{1,2,4,7} \cup \cS_{1,3,5,6}$ if $n \ge 9$ and $n \not\equiv 0 \pmod{8}$.
By Observation~\ref{obs:basic}, the same result $\chi_S(C_n) \ge 5$ can be extended to all packing sequences $S \in \mathcal{S}_{1,2,\overline{4},7} \cup \cS_{1,3,\overline{4},7} \cup \cS_{1,3,\overline{5}, 6}$ and the $\chi_S$-criticality follows.
\medskip

For the remaining small cases where $n \le 7$, we observe that
$\chi_S(P_3)=2 < 3= \chi_S(C_3)$,
$\chi_S(P_4)=3=\chi_S(C_4)$,
$\chi_S(P_5)=3 < 4 = \chi_S(C_5)$,
$\chi_S(P_6)=3 < 4 = \chi_S(C_6)$,
$\chi_S(P_7)=3 < 4 \le \chi_S(C_7)$ are true for all $S \in \mathcal{S}_{1,2,\overline{4},7} \cup \cS_{1,3,\overline{4},7} \cup \cS_{1,3,\overline{5}, 6}$. This finishes the proof of the theorem.
    \end{proof}
 
\section{$\chi_S$-vertex-critical cycles} \label{sec:vertex-critical-cycles}
In this section, we characterize the $\chi_S$-vertex-critical cycles for every packing sequence $S \in \cS(4)$.
The result is given in the following theorem.
   \begin{theorem} \label{thm:vertexcritical} Let $S$ be a packing sequence. For a cycle $C_n$ with $n \ge 3$, the following statements hold.

    \begin{itemize} 
    
    \item[$(i)$] If $S \in \mathcal{S}_{1,2,4,4}$ or  $S \in \mathcal{S}_{1,3,4,4}$, then $C_n$ is $\chi_S$-vertex-critical if and only if $n \leq 9$.
    
    \item[$(ii)$] If $S \in \mathcal{S}_{1,2,\overline{4},5}$, then $C_n$ is $\chi_S$-vertex-critical if and only if $n \leq 11$ or $n = 17$.
    
    \item[$(iii)$] If $S \in \mathcal{S}_{1,2,\overline{4},6}$, then $C_n$ is $\chi_S$-vertex-critical if and only if $n \leq 13 $ or $n \in \{17, 18, 19, 20, 25, 26, 27, 33, 34, 41\}$.        
	
	\item[$(iv)$] If $S \in \mathcal{S}_{1,3,4,5}$, then $C_n$ is $\chi_S$-vertex-critical if and only if $n \leq 11 $ or $n \in \{15,  17, 23\}$.
		
	\item[$(v)$] 	If $S \in \mathcal{S}_{1,3,5,5}$, then $C_n$ is $\chi_S$-vertex-critical if and only if $n \leq 10 $ or $n$ is odd.

      \item[$(vi)$] If $S \in \mathcal{S}_{1,2,\overline{4},7} \cup \cS_{1,3,\overline{4},7} \cup \cS_{1,3,\overline{5}, 6}$, then $C_n$ is $\chi_S$-vertex-critical if and only if 
$n \not\equiv 0 \pmod{8}$ or $n = 8$. 
\end{itemize}
    \end{theorem}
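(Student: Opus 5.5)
The plan rests on one observation: for a cycle, deleting any vertex gives a path, so $C_n - v \cong P_{n-1}$ for every $v$. Hence $C_n$ is $\chi_S$-vertex-critical if and only if
$$\chi_S(P_{n-1}) < \chi_S(C_n).$$
Since $P_n$ is a subgraph of $C_n$, and every proper subgraph of $C_n$ is contained in $P_n$ or in a disjoint union of shorter paths, $C_n$ is $\chi_S$-critical if and only if $\chi_S(P_n)<\chi_S(C_n)$. Comparing the two conditions is the whole proof. By Theorem~\ref{thm:path}, applied to $S\in\cS(4)$, we have $\chi_S(P_m)=\min\{4,\lfloor\log_2 m\rfloor+1\}$. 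So $\chi_S(P_{n-1})=\chi_S(P_n)$ unless $n\in\{2,4,8\}$, and for $n\ge 9$ both values equal $4$.

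For $n\ge 9$ (and generally for $n\notin\{4,8\}$), vertex-criticality therefore coincides with criticality, and I will simply read off the lists from Theorems~\ref{thm-4.1}, \ref{thm-4.2} and \ref{thm-4.3}. This matches the large-$n$ parts of statements (i)--(vi). For (v), every odd $n$ works. In (vi), vertex-criticality holds exactly when $n\not\equiv 0\pmod 8$, apart from small exceptions.

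Next I handle the small cases $3\le n\le 8$. For $n\in\{3,5,6,7\}$ the cycle is critical in every case, by the tables and the final paragraph of the proof of Theorem~\ref{thm-4.3}, hence it is also vertex-critical. For $n=4$ we have $\chi_S(P_3)=2<3=\chi_S(C_4)$, so $C_4$ is vertex-critical though not critical. This explains why $n=4$ appears among the vertex-critical cycles in every part (note that $4\not\equiv 0 \pmod 8$ already covers it in (vi)).

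For $n=8$ we have $\chi_S(P_7)=3$. The pattern $12131214$ shows that $\chi_S(C_8)\le 4$, and $\chi_S(C_8)\ge\chi_S(P_8)=4$. So $\chi_S(P_7)<\chi_S(C_8)$, and $C_8$ is vertex-critical for every $S\in\cS(4)$. This produces the exception $n=8$ in (vi) and fits $n\le 9$, $n\le 10$, $n\le 11$, $n\le 13$ in the other parts.

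Finally, I will assemble the lists and check the boundary of each ``$n\le\cdot$'' range against the tables. For example, in (ii) the values $n=9,10,11$ are critical and $n=8$ is vertex-critical; in (iii), $n=12,13$ are critical. In (i) I must check that $n=10,11$ are not vertex-critical for $\cS_{1,2,4,4}$ and $\cS_{1,3,4,4}$. This follows because those cycles have $\chi_S=4=\chi_S(P_{n-1})$, by Table~\ref{tab:smallcases} and Table~\ref{tab:smallcases1}. The main obstacle is purely bookkeeping: making sure each listed interval matches the union of the critical list with $\{4,8\}$. For (v), the table gives $\chi_S(C_{10})=5$, which is already in the critical list. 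All substantive work is contained in the earlier theorems.
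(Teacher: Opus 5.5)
Your proposal is correct and follows essentially the same route as the paper. The paper also reduces vertex-criticality to $\chi_S(P_{n-1})<\chi_S(C_n)$ and uses $\chi_S(P_{n-1})\le\chi_S(P_n)\le\chi_S(C_n)$. So $C_n$ is vertex-critical if and only if either the path value jumps, which by Theorem~\ref{thm:path} happens only for $n\in\{4,8\}$, or $C_n$ is $\chi_S$-critical; the lists are then read off Theorems~\ref{thm-4.1}--\ref{thm-4.3}, exactly as you do.
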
 
    \begin{proof}
        All packing sequences $S=(1, s_2, s_3, s_4, \dots )$ considered in the statement satisfy $s_2 \in \{2,3\}$ and $s_3,s_4 \in \{4,5,6,7\}$.
         Thus  $S \in \cS(4)$ and Theorem~\ref{thm:path} shows
        \begin{equation} \label{eq:vertex-crit-1}
            \chi_S(P_n)= \min \{4,\lfloor \log_2(n) \rfloor +1\}
     \end{equation}
     for every $n \ge 1$. 

        Let $n \ge 3$. A cycle $C_n$ is $\chi_S$-vertex-critical if and only if $\chi_S(P_{n-1}) < \chi_S(C_n)$. We know that $\chi_S(P_{n-1}) \le \chi_S (P_n) \le \chi_S(C_n)$ is true for every packing sequence $S$. Consequently, $C_n$ is $\chi_S$-vertex-critical if and only if at least one of (a) $\chi_S(P_{n-1}) < \chi_S (P_n)$ and (b) $\chi_S (P_n) < \chi_S(C_n)$ holds. By the formula in~\eqref{eq:vertex-crit-1}, the strict inequality (a) holds if and only if $n \in \{4,8\}$. By definition, (b) is equivalent to $C_n$ being a $\chi_S$-critical cycle. 

        Using these facts, Theorem~\ref{thm-4.1} (i) and Theorem~\ref{thm-4.2} (i) imply part (i) in our theorem; parts (ii) and (iii) follow from Theorem~\ref{thm-4.1}; parts (iv) and (v) follow from Theorem~\ref{thm-4.2}; and part (vi) follows from Theorem~\ref{thm-4.3}.  
         \end{proof}

         Note that $\chi_S$-criticality and $\chi_S$-vertex-criticality coincide for paths by Proposition~\ref{prop:critical-path}. In contrast, this equivalence does not hold for cycles, as shown in the previous result.
\section*{Acknowledgements}

This work was supported by the Slovenian Research and Innovation Agency (ARIS) under the grants P1-0297, N1-0355, J1-70045, and by the Scientific and Technological Research Council of Türkiye (TÜBİTAK) under grant no.\ 124F114.


\end{document}